\theoremstyle{plain}
\newtheorem{theorem}{Theorem}[section]
\newtheorem{lemma}[theorem]{Lemma}
\newtheorem{proposition}[theorem]{Proposition}
\newtheorem{conjecture}{Conjecture}
\newtheorem*{conjecture*}{Conjecture}
\newtheorem{definition}[theorem]{Definition}
\newtheorem{remark}{Remark}
\newtheorem*{Graal1*}{Special Case of Problem~\ref{Graal1}}
\newtheorem*{Graal2*}{Special Case of Problem~\ref{Graal2}}
\newtheorem*{theorem*}{Theorem}
\newenvironment{customthm}[1]
  {\innercustomthm}
  {\endinnercustomthm}
\theoremstyle{remark}
\newtheorem*{remark*}{{\bf Remark}}
\newtheorem*{remarks*}{{\bf Remarks}}
\newtheorem*{comment*}{{\bf Comment}}
\newcommand{\wt}{\widetilde}
\newcommand{\nnorm}[1]{\lvert\!|\!| #1|\!|\!\rvert}
\newcommand{\bignorm}[1]{\big\lVert #1 \big\rVert}
\newcommand{\norm}[1]{\lVert #1 \rVert}
\newcommand{\N}{\mathbb{N}}
\newcommand{\Q}{\mathbb{Q}}
\newcommand{\R}{\mathbb{R}}
\newcommand{\T}{\mathbb{T}}
\newcommand{\Z}{\mathbb{Z}}
\newcommand{\B}{\mathcal{B}}
\newcommand{\C}{\mathbb{C}}
\newcommand{\G}{\Gamma}
\newcommand{\X}{\mathcal{X}}
\newcommand{\m}{\mu}
\newcommand{\bol}{{\bf 1}}
\newcommand{\e}{\varepsilon}
\def \colon{{:}\;}
\DeclareMathOperator*{\E}{\mathbb{E}}
\DeclarePairedDelimiter\floor{\lfloor}{\rfloor}
\title{Pointwise convergence in nilmanifolds along smooth functions of polynomial growth}
\thanks{The author was supported by the Research Grant - ELIDEK HFRI-FM17-1684 and ELIDEK-Fellowship number 5367 (3rd Call for HFRI Ph.D. Fellowships) during the preparation of this article.}
\author{Konstantinos Tsinas}
\address[Konstantinos Tsinas]{University of Crete, Department of Mathematics and applied mathematics, Voutes University Campus, Heraklion 71003, Greece} \email{kon.tsinas@gmail.com}
\subjclass[2020]{Primary: 22F30; Secondary:   37A17.}
\keywords{Ergodic averages, Equidistribution, Nilmanifolds, Hardy fields}
\begin{document}

\maketitle

\begin{abstract}
    We study the equidistribution of orbits of the form $b_1^{a_1(n)}\cdots b_k^{a_k(n)}\G$ in a nilmanifold $X$, where the sequences $a_i(n)$ arise from smooth functions of polynomial growth belonging to a Hardy field. We show that under certain assumptions on the growth rates of the functions $a_1,...,a_k$, these orbits are equidistributed on some subnilmanifold of the space $X$. As an application of these results and in combination with the Host-Kra structure theorem for measure preserving systems, as well as some recent seminorm estimates of the author for ergodic averages concerning Hardy field functions, we deduce a norm convergence result for multiple ergodic averages. 
    Our method mainly relies on an equidistribution result of Green-Tao on finite segments of polynomial orbits on a nilmanifold.
\end{abstract}

\section{Introduction and main results}\label{introduction}

\subsection{History and main goals} In recent years, there has been an active interest in determining the limiting behavior of the multiple ergodic averages \begin{equation}\label{multiple}
    \frac{1}{N}\sum_{n=1}^{N} f_1(T^{a_1(n)}x)\cdots f_k(T^{a_k(n)}x)
\end{equation}for various sequences $a_1(n),...,a_k(n)$ of integers, where $T$ is an invertible measure preserving transformation acting on a probability space $(X,\mathcal{X},\m)$ and $f_1,..,f_k$ are functions in $L^{\infty}(\m)$. Through the breakthrough work of Furstenberg \cite{Fu1}, which delivered a new proof of Szemer\'{e}di's theorem using tools from ergodic theory, it has been apparent that the analysis of the averages in \eqref{multiple} has noteworthy applications to number theory and combinatorics. In particular, we now have substantial generalizations of Szemer\'{e}di's theorem, some of which have not been demonstrated with approaches other than the use of ergodic theory.

An integral tool in verifying convergence of the averages in \eqref{multiple} is the structure theorem of Host-Kra \cite{Host-Kra1}, which in multiple cases reduces the above problem to studying rotations on particular spaces with algebraic structure, which are called nilmanifolds (see \cite{Host-Kra structures} for a full presentation of the theory). A nilmanifold is a homogeneous space $X=G/\G$, where $G$ is a nilpotent Lie group and $\G$ is a discrete cocompact subgroup.
The study of nilmanifolds is essential due to its ties to ergodic theory mentioned above, as well as the numerous applications to combinatorics and number theory.

In this article, our central problem is the study of the distribution of orbits in a nilmanifold along sequences that arise from smooth functions with polynomial growth. We suppose that our functions are elements of a Hardy field (for the definition of a Hardy field, we direct the reader to Section \ref{background}). 
The benefit of working within a Hardy field is that certain "regularity" properties of the derivatives of a function, which are vital in several parts of our proofs, can be extrapolated from a simple growth condition on the initial function. For instance, a condition like \eqref{P_0} below imposes multiple pleasant properties on the derivatives of a function in $\mathcal{H}$.

The field of logarithmico-exponential functions is the prototypical example of a Hardy field. It is defined as the collection of functions formed by a finite combination of the operations $+,-,\cdot,\div,\  \exp$, $\log$ and composition of functions acting on a real variable $t$ (which takes values on some half-line $[x,+\infty)$) and real constants. The fact that it is a Hardy field was established in \cite{Hardy 1}. Our results are most interesting for the Hardy field $\mathcal{LE}$ and one can keep this particular case in mind throughout the article. In addition, we refer the reader to Appendix \ref{nilpotent crap} for the definition and properties of nilmanifolds, which appear in the subsequent discussion and the main theorems.

Due to its connections to ergodic theory and combinatorics, the investigation of equidistribution properties along Hardy sequences has been carried out several times throughout the literature.
First of all, we recall a fundamental result concerning the equidistribution of Hardy sequences, which corresponds to the basic case when the underlying nilmanifold is a finite-dimensional torus. In particular, we restate here Theorem 1.3 from \cite{Boshernitzan1}:

\begin{customthm}{A}[Boshernitzan]\label{Bosh}
Let the function $a \in \mathcal{H}$ have polynomial growth. Then, the sequence $a(n)$ is equidistributed $\mod 1$ if and only if\begin{equation}\label{P_0}
   \tag{P} \lim\limits_{t\to+\infty} \frac{|a(t) -p(t)   |}{\log t}=+\infty    \ \text{ for any polynomial }\ p(t)\in \Q [t].
\end{equation}
\end{customthm}

Applying Weyl's equidistribution theorem and the previous result, we can effortlessly show that if the functions $a_1,...,a_k$ have polynomial growth and each non-trivial linear combination of them stays logarithmically away from real multiples of integer polynomials, then the sequence $(c_1a_1(n),...,c_ka_k(n))$ is equidistributed on $\T^k$ for all non-zero real numbers $c_1,...,c_k$. Practically, Theorem \ref{Bosh} can be used to examine orbits on $\T^k$ along the sequences $a_1(n),\dots, a_k(n)$ on $\T^k$, answering our problem in the case when the nilmanifold $X$ is any finite-dimensional torus (the abelian case).
Another corollary of Theorem \ref{Bosh} is that if $a(t)\in \mathcal{H}$ stays logarithmically away from real multiples of integer polynomials, then the sequence $\floor{a(n)}a$ is equidistributed on $\T$ for all irrational $a\in (0,1)$. This phenomenon (namely, that equidistribution properties of $a(n)$ yield information for the equidistribution properties of $\floor{a(n)}$)
will be present throughout the article, so the reader can view statements involving $a(n)$ in place of $\floor{a(n)}$ as being morally the same.

Suppose now that we are given a nilmanifold $X=G/\G$ (for the definitions of all terms below, see Appendix \ref{nilpotent crap})
and assume that the group $G$ is connected and simply connected. We are interested in the behavior of the sequence \begin{equation}\label{g shit}
    v(n)=(b_1^{\floor{a_1(n)}}\G,\dots,b_k^{\floor{a_k(n)}}\G),
\end{equation}
where $b_1,\dots ,b_k$ are elements of the group $G$ and $a_1,\dots,a_k$ are Hardy field functions. Notice that this is a sequence on the product nilmanifold $X^k$.
The most fundamental equidistribution result is due to Leibman, who showed that if the functions $a_1,\dots a_k$ are integer polynomials, then we have equidistribution on a "subspace" of $X$ (called a subnilmanifold), as long as we restrict the values of $n$ to appropriate arithmetic progressions.

More specifically, we present the following theorem \cite[Theorem B]{Leibmanpointwise}, an application of which (on the nilmanifold $X^k$) implies the claim in the previous paragraph. 

\begin{customthm}{B}[Leibman]\label{Leibmantheorem}
Let $X=G/\G$ be a nilmanifold and $x\in X$. Consider the sequence  \begin{equation}
    g(n)=b_1^{p_1(n)}\dots b_k^{p_k(n)}
\end{equation}
in $G$, where $b_1,\dots,b_k\in G$ and $p_1,\dots, p_k$ are polynomials with integer coefficients. Then, there exists $Q\in \N$, a closed, connected and rational subgroup $H$ of $G$ and points $x_0,\dots x_{Q-1}\in X$, such that for every $r\in \{0,\dots ,Q-1\}$ the sequence $g(Qn+r)x$ is equidistributed on the subnilmanifold $Hx_r$.
\end{customthm}

A noteworthy corollary of the previous theorem is that if $F: X\to\C$ is a continuous function, then the averages \begin{equation*}
    \frac{1}{N}\sum_{n=1}^{N} F(g(n)x)
\end{equation*}converge pointwise for all $x\in X$. This can be used in conjunction with the Host-Kra structure theory (see Theorem \ref{structure} in Section \ref{background}) to infer that the averages in \eqref{multiple} converge in norm, when the sequences $a_1(n),...,a_k(n)$ are integer polynomial sequences. In addition, we deduce (as a corollary of \cite[Theorem C]{Leibmanpointwise} in the same paper) that if $G$ is connected, the equidistribution of the sequence $g(n)\G$ is controlled by the projection of $g(n)\G$ on the "abelianization" $G/[G, G]\G$ of $G/\G$, which is a finite-dimensional torus called the horizontal torus of $X$. 

A major improvement\footnote{While their theorem was established under the stronger hypothesis that the underlying Lie group $G$ is connected and simply connected, one can typically reduce to this case in many applications.} of the above theorem was established by Green and Tao in \cite{Green-Tao}, who characterized the behavior of polynomial orbits on nilmanifolds in quantitative language. This theorem has notable applications in number theory and will be undoubtedly vital in this paper. Like Leibman's theorem in \cite{Leibmanpointwise} that we mentioned above briefly, this theorem highlights the relation of the equidistribution properties of a polynomial sequence (see Definition \ref{filtration}) on a nilmanifold with its projection to the horizontal torus.
Since there are many technical terms that are required in order to state this theorem, we have presented its statement in Appendix \ref{nilpotent crap} along with a sample corollary when the nilmanifold is a torus, as well as all of the required background on the quantitative equidistribution theory on nilmanifolds.

Now, let us consider the more general case when the sequences $a_1(n),\dots a_k(n)$ appearing in \eqref{g shit} are not just integer polynomials, but functions that belong to a Hardy field $\mathcal{H}$.
In the case $k=1$, Frantzikinakis established \cite{Fraeq} that if the function $a(t)$ satisfies \begin{equation*}
   \lim\limits_{t\to+\infty} \frac{|a(t) -cp(t)   |}{\log t}=+\infty    \ \text{ for any polynomial }\ p(t)\in \Z [t],
\end{equation*}then the sequence $b^{\floor{a_1(n)}}x$ is equidistributed on the orbit $Y=\overline{\{b^nx\colon n\in \N\}}$ of $b$ for any $b\in G$ and $x\in X$. In the case of general $k$, he also established the next theorem in the same paper:

\begin{customthm}{C}[Frantzikinakis]\cite[Theorem 1.3]{Fraeq}\label{Fratheorem}
    Let $a_1,\dots, a_k$ be functions of polynomial growth that belong to a Hardy field $\mathcal{H}$, such that they have pairwise distinct growth rates and satisfy \begin{equation}\label{Fragrowthcondition}
        t^{k_i}\log t\prec a_i(t)\prec t^{k_i+1}
    \end{equation}for some $k_i\in \N$. Then, for any nilmanifold $X=G/\G$ and $b_1,\dots, b_k\in G$, the sequence \begin{equation}\label{productsequence}
        \Big(b_1^{\floor{a_1(n)}}x_1,\dots, b_k^{\floor{a_k(n)}}x_k   \Big)_{n\in \N}
    \end{equation}is equidistributed on $\overline{(b_1^{n}x_1)}_{n\in \N}\times \dots \times \overline{(b_k^nx_k)}_{n\in \N}$ for all $x_1,\dots, x_k\in X$.
\end{customthm}

In the same paper, Frantzikinakis conjectured that if the linear combinations of the functions $a_1,\dots,a_k$ stay logarithmically away from real multiples of integer polynomials, then the sequence in \eqref{productsequence} is equidistributed on  $\overline{(b_1^{n}x_1)_{n\in \N}}\times \dots \times \overline{(b_k^nx_k)_{n\in \N}}$. More specifically, we have the following:

\begin{conjecture}\cite{Fraeq}
      Let $a_1,\dots, a_k$ be functions in a Hardy field $\mathcal{H}$ with polynomial growth and such that every non-trivial linear combination $a(t)$ of them satisfies \begin{equation*}
            \lim\limits_{t\to+\infty} \frac{|a(t) -p(t)   |}{\log t}=+\infty    \ \text{ for any polynomial }\ p(t)\in \Z[t].  
      \end{equation*}Then, for any nilmanifold $X=G/\G,\ b_i\in G$ and $x_i\in X$, the sequence\begin{equation*}
           \Big(b_1^{\floor{a_1(n)}}x_1,\dots, b_k^{\floor{a_k(n)}}x_k   \Big)_{n\in \N}
      \end{equation*}is equidistributed on $\overline{(b_1^{n}x_1)}_{n\in \N}\times \dots \times \overline{(b_k^nx_k)}_{n\in \N}$.
\end{conjecture}

Recently, Richter established the following equidistribution theorem. We present here a special case of that result, where we assume that the underlying Lie group $G$ is connected and simply connected so that the elements $b^s$ are defined for any $b\in G$ and $s\in \R$ (see also the first paragraph of Subsection \ref{bs-Ratner subsection } for a more thorough explanation). We also define \begin{equation*}
    \nabla-\text{span}\{a_1,\dots,a_k\}=\{c_1a_1^{(n_1)}(t)+\dots +c_ka_k^{(n_k)}(t)\colon  \ c_i\in \R, \ n_i\in \N\cup \{0\}\}.
\end{equation*} 
\begin{customthm}{D}[Richter]\cite[Theorem B]{Richter}\label{Richtertheorem}
    Let $X=G/\G$ be a nilmanifold with $G$ connected and simply connected and let $a_1,\dots, a_k$ be functions in a Hardy field $\mathcal{H}$, such that for any function $a\in \nabla-\text{span}\{a_1,\dots,a_k\}$, we have that $$|a(t)-p(t)|\ll 1 \ \text{ or } |a(t)-p(t)|\succ \log t,$$for any polynomial $p(t)\in \R[t]$.
    Consider any commuting elements $b_1,\dots,b_k\in G$ and define the sequence \begin{equation*}
        v(n)=b_1^{a_1(n)}\dots b_k^{a_k(n)}.
    \end{equation*}Then, there exists a closed, connected, and rational subgroup $H$ of $G$ and points $x_0,\dots x_{Q-1}$ in $X$, such that the sequence $v(Qn+r)\G$ is equidistributed on the subnilmanifold $Hx_r$ of $X$ for all $r\in \{0,\dots, Q-1\}$.
\end{customthm}The hypothesis that $b_1,\dots, b_k$ are commuting is harmless in problems regarding the convergence of ergodic averages or in applications to combinatorics. Furthermore, while in this setting we have the sequences $a_i(n)$ instead of $\floor{a_i(n)}$ in the exponents, the statement above actually implies an equidistribution theorem for the sequences $\floor{a_i(n)}$. We remark that the results in \cite{Richter} are generalized to equidistribution results with respect to (weaker) averaging schemes other than Ces\'{a}ro averages. Under those averaging schemes, the assumptions on the functions $a_1,\dots, a_k$ can be weakened significantly (on the other hand, our results deal only with Ces\'{a}ro averages). In the follow-up paper \cite{BerMorRic}, Bergelson, Moreira and Richter employed the above equidistribution results to obtain convergence results for multiple ergodic averages and combinatorial applications for Hardy field sequences.

\subsection{Main results}

In order to state our results, we will assume that we have a fixed Hardy field $\mathcal{H}$, and the only extra hypothesis we require is that it includes the polynomial functions (this is a very mild restriction). Removing this restriction may be possible, though this would certainly complicate our arguments or the notation in the proofs. Unless noted otherwise, our theorems below apply to any such Hardy field. An exception is made only for Theorem \ref{simple} (we shall reiterate these assumptions in the main theorems).

For a given set of functions $a_1,...,a_k$ in our Hardy field $\mathcal{H}$, we use the notation \begin{equation}\label{mathcal{L}}
    \mathcal{L}(a_1,...,a_k)=\{c_1a_1+\dots +c_ka_k\colon \  (c_1,...,c_k)\in \R^{k}\setminus\{0\}\}
\end{equation}
to refer to the collection of functions in $\mathcal{H}$ that are non-trivial linear combinations of the functions $a_1(t),...,a_k(t)$. The nilmanifolds $\overline{(b^{\R}x)}$ and $\overline{(b^{\N}x)}$ are defined in Subsection \ref{bs-Ratner subsection }.

\begin{theorem}\label{equidistribution}Let $\mathcal{H}$ be a Hardy field containing the polynomial functions.
Let $a_1,...,a_k$ be functions in $\mathcal{H}$ that have polynomial growth. Assume that there exists\footnote{The value of $\e$ depends only on the initial collection $\{a_1,...,a_k\}$.} an $\e>0$, such that every function $a\in \mathcal{L}(a_1,...,a_k)$ satisfies\footnote{Equivalently, we could require that $p(t)\in \Z[t]$, because this is a condition on all the linear combinations of the functions $a_1,...,a_k$. } 
\begin{equation}\label{P0}
    \lim\limits_{t\to+\infty} \frac{|a(t) -p(t)   |}{t^{\e}}=+\infty    \ \text{ for any polynomial }\ p(t)\in \Q[t].
\end{equation} Then, we have the following:\\
 (i) For any collection of nilmanifolds $X_i=G_i/\Gamma_i$, elements $b_i\in G_i$ and $x_i\in X_i$, the sequence 
    \begin{equation*}
        \big( b_1^{\floor{a_1(n)}}x_1,...,b_k^{\floor{a_k(n)}}x_k    \big)
    \end{equation*}is equidistributed on the nilmanifold $\overline{(b_1^{\N} x_1)}\times\dots \times \overline{(b_k^{\N} x_k)}$.\\
    (ii) For any collection of nilmanifolds $X_i=G_i/\Gamma_i$ such that the groups $G_i$ are connected, simply connected, elements $b_i\in G_i$ and $x_i\in X_i$, the sequence 
    \begin{equation*}
        \big( b_1^{a_1(n)}x_1,...,b_k^{a_k(n)}x_k   \big)
    \end{equation*}is equidistributed on the nilmanifold $\overline{(b_1^{\R} x_1)} \times\dots \times \overline{(b_k^{\R} x_k)}$.
\end{theorem}
\begin{remark}
a) The connectedness assumptions imposed on the second part of the previous theorem ensure that all elements of the form $b^s$ where $b\in G$ and $s\in \R$ are well defined (see also Appendix \ref{nilpotent crap} for the definition of the element $b^s$ for non-integer $s$). \\
b) In regards to part ii) of the previous theorem, we establish the more general statement that if $b_1,\dots,b_k$ commute, the sequence $b_1^{a_1(n)}\cdots b_k^{a_k(n)}\G$ is equidistributed on the nilmanifold $\overline{b_1^{\R}\cdots b_k^{\R}\G}$. The fact that this is indeed a more general statement can be seen by passing to the product nilmanifold $X_1\times \dots \times X_k$. A similar assertion holds for Theorem \ref{pointwise} below and we provide more details on this deduction after Proposition \ref{impprop}. 
\end{remark}

Observe that, in contrast to Theorem \ref{Bosh}, we have the term $t^{\e}$ in the denominator, which is just out of reach of the conjectured optimal term $\log t$. As an example, using Theorem \ref{equidistribution}, we can prove that for any elements $b_1,b_2\in G$, the sequence $(b_1^{n\log n}\G,b_2^{n^{3/2}}\G)$ is equidistributed on the nilmanifold $(\overline{b_1^{\R}\G},\overline{b_2^{\R}\G})$, assuming that $G$ satisfies the appropriate connectedness assumptions since we want these elements to be well defined.

If we have functions that are not linearly independent, then the above theorem fails, as can be seen by noting that the sequence $(n^{3/2},n^{1/2},n^{3/2}+n^{1/2})$ is not equidistributed on $\T^{3}$. However, we can relax the linear independence condition in Theorem \ref{equidistribution} and still obtain a convergence result:
 \begin{theorem}\label{pointwise}Let $\mathcal{H}$ be a Hardy field containing the polynomial functions.
 Let $a_1,...,a_k$ be functions in $\mathcal{H}$ that have polynomial growth. Assume that there exists $\e>0$, such that every function $a\in \mathcal{L}(a_1,...,a_k)$ satisfies either
\begin{equation}\label{P1}
    \lim\limits_{t\to+\infty} \frac{|a(t) -p(t)   |}{t^{\e}}=+\infty    \ \text{ for any polynomial }\ p(t)\in \Q[t],
\end{equation} or \begin{equation}\label{P2}
  \text{the limit }\  \lim\limits_{t\to+\infty} a(t)  \ \text{is a real number}.
    \end{equation}Then, we have the following:\\
    (i) For any collection of nilmanifolds $X_i=G_i/\Gamma_i$, elements $b_i\in G_i$, $x_i\in X_i$ and continuous functions $f_1,...,f_k$ with complex values, the averages \begin{equation*}
        \frac{1}{N}\sum_{i=1}^{N} f_1(b_1^{\floor{a_1(n)}}x_1)\cdot ... \cdot f_k(b_k^{\floor{a_k(n)}}x_k)
    \end{equation*}converge.\\
(ii) For any collection of nilmanifolds $X_i=G_i/\Gamma_i$ such that the groups $G_i$ are connected, simply connected, elements $b_i\in G_i$, $x_i\in X_i$ and continuous functions $f_1,...,f_k$ with complex values, the averages \begin{equation*}
        \frac{1}{N}\sum_{i=1}^{N} f_1(b_1^{a_1(n)}x_1)\cdot ... \cdot f_k(b_k^{a_k(n)}x_k)
    \end{equation*}converge.
\end{theorem}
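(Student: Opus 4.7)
The plan is to derive both parts of Theorem~\ref{pointwise} from Theorem~\ref{equidistribution} by peeling off the convergent component of each $a_i$. Let $V$ denote the $\R$-linear span of $\{a_1,\ldots,a_k\}$ in $\mathcal{H}$, and $W\subset V$ the subspace of elements satisfying \eqref{P2} ($W$ is a subspace because the limit at infinity is $\R$-linear); by hypothesis every element of $V\setminus W$ satisfies \eqref{P1}. Fix a basis $\tilde a_1,\ldots,\tilde a_m$ of some linear complement of $W$ in $V$, so that every non-trivial combination of the $\tilde a_j$'s lies in $V\setminus W$ and hence satisfies \eqref{P1}, and decompose
\[
a_i(n)=\sum_{j=1}^m c_{ij}\,\tilde a_j(n)+w_i(n),\qquad c_{ij}\in\R,\ w_i\in W,\ w_i(n)\to w_i^*\in\R.
\]

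For part~(ii), connectedness and simple connectedness of each $G_i$ make $s\mapsto b_i^s$ a one-parameter subgroup, so we may split
\[
b_i^{a_i(n)}x_i=\Bigl(\prod_{j=1}^m (b_i^{c_{ij}})^{\tilde a_j(n)}\Bigr)\cdot b_i^{w_i^*}\cdot b_i^{w_i(n)-w_i^*}x_i.
\]
Inside the product group $G:=G_1\times\cdots\times G_k$ the elements $C_j:=(b_1^{c_{1j}},\ldots,b_k^{c_{kj}})$ commute coordinate-wise (within each slot they all lie in the same one-parameter subgroup of $b_i$), so the full sequence becomes $\bigl(\prod_j C_j^{\tilde a_j(n)}\bigr)\cdot y\cdot \varepsilon(n)$, where $y=(b_1^{w_1^*}x_1,\ldots,b_k^{w_k^*}x_k)$ and $\varepsilon(n)\to e_G$. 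A left-invariant Riemannian metric on $G$ descends to a $G$-invariant metric on the product nilmanifold, so $G$-translations are isometries and $\varepsilon(n)$ contributes $o(1)$ under any uniformly continuous $F$. Since the basis $\tilde a_1,\ldots,\tilde a_m$ satisfies the hypothesis of Theorem~\ref{equidistribution}, the remark following that theorem applied to the commuting $C_j$'s yields equidistribution of $\prod_j C_j^{\tilde a_j(n)}\cdot y$ on a subnilmanifold of the product, whence convergence of the Ces\`aro averages.

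For part~(i) the one-parameter-subgroup trick is unavailable, but the same general strategy still works via a staged reduction. If some $a_i$ itself lies in $W$, then $a_i-\lim a_i$ is a Hardy field function tending to~$0$ and is therefore eventually of constant sign with absolute value less than~$1$, so $\lfloor a_i(n)\rfloor$ is eventually constant; the $i$-th factor in the product is then eventually constant and pulls out of the average, and iterating we may assume that every $a_i$ satisfies \eqref{P1}. If no non-trivial combination of the remaining $a_i$'s lies in $W$ either, Theorem~\ref{equidistribution}(i) applies directly. In the contrary case, with a relation $\sum_j\alpha_j a_j=w\in W$, we use the identity $\lfloor x+y\rfloor=\lfloor x\rfloor+\lfloor y+\{x\}\rfloor$ together with the equidistribution mod~$1$ of the fractional parts $\{a_j(n)\}$ (guaranteed by Theorem~\ref{Bosh}) to split $\N$ into boundedly many pieces on each of which the integer-valued orbit is governed by a reduced system with $k-1$ functions; an induction on~$k$ then closes the argument. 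The principal obstacle is this last step: when the relation has irrational coefficients the floors cannot be distributed over the linear combination, and the partition of $\N$ must be carried out on the product nilmanifold with enough care that each piece still satisfies the hypotheses of a lower-dimensional instance of Theorem~\ref{pointwise}.
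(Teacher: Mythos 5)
Your argument for part~(ii) is sound and is, in substance, the same as the paper's: Step~1 of the paper's proof likewise passes to a basis of $\mathcal{L}(a_1,\ldots,a_k)$, discards the basis directions with finite limits by continuity of $F$ (your $W$-directions), and then proves equidistribution of the surviving orbit $u_1^{g_1(n)}\cdots u_s^{g_s(n)}\G$ on $\overline{u_1^{\R}\cdots u_s^{\R}\G}$ via the Green--Tao quantitative theorem. You package this as a reduction to the remark after Theorem~\ref{equidistribution}; that is logically acceptable, but the paper proves that remark by the very same technical argument, so the reduction does not bypass the core work.

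The genuine gap is in part~(i), and you have correctly identified it, but it is not a fixable technicality of your route. A relation $\sum_j\alpha_j a_j\in W$ with irrational $\alpha_j$ among functions satisfying \eqref{P1} cannot be handled by partitioning $\N$ along fractional parts: the floor does not interact with an irrational linear combination in any structured way, and the resulting "pieces" carry no reduced $(k-1)$-function Hardy-field system to induct on. The paper closes part~(i) by an entirely different mechanism, Lemma~\ref{removerounding}, which never touches the linear relations. After lifting so that $G$ is connected and simply connected (hence $b_i^s$ is defined for real $s$), one writes $b_i^{\lfloor a_i(n)\rfloor}=b_i^{-\{a_i(n)\}}\,b_i^{a_i(n)}$ and views the whole product as a bounded function $\widetilde F$ evaluated on the real-time orbit $\bigl(a_1(n)\Z,\,b_1^{a_1(n)}\G,\,\ldots,\,a_k(n)\Z,\,b_k^{a_k(n)}\G\bigr)$ on the bigger nilmanifold $\T^k\times X^k$, which has a connected, simply connected representation. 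The only discontinuities of $\widetilde F$ lie where some $\T$-coordinate equals $0$, and since every non-convergent $a_i$ satisfies \eqref{P1} and is therefore equidistributed $\bmod 1$ (Theorem~\ref{Bosh}), the set of $n$ landing within distance $\e$ of the discontinuities has upper density $O(k\e)$; sandwiching $\widetilde F$ between continuous functions that agree with it off this thin set and invoking part~(ii) on $\T^k\times X^k$ then finishes. The crucial point is that this argument uses only properties of each $a_i$ \emph{individually} (equidistributed $\bmod 1$, convergent to a nonzero class, or monotonically convergent to $0$), so irrational relations among them cost nothing. That sandwiching idea is the ingredient your proposal is missing.
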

The main distinction between Theorems \ref{equidistribution} and \ref{pointwise} is that in the second case, we allow for linear dependencies between the functions $a_1(t),..., a_k(t)$ (for example, we may have the functions $(t\log t, t^{3/2},t^{3/2}+t\log t)$). We will use this theorem to deduce a convergence result for multiple ergodic averages (Theorem \ref{simple} below).

Theorems \ref{equidistribution} and \ref{pointwise} extend the equidistribution result of Theorem \ref{Fratheorem} from \cite{Fraeq}, where the functions $a_1,...,a_k$ were assumed to have different growth rates and satisfy the growth condition in \eqref{Fragrowthcondition}. On the other hand, our results are complementary to the results in \cite{Richter}, in the sense that both Theorem \ref{equidistribution} and Theorem \ref{Richtertheorem} each cover collections of functions that are not implied by the other one. The main difference between our results and the results in the previous literature (in the case of general $k$) is that prior results did not cover functions in the range $t^{\ell}\prec a(t)\ll  t^{\ell}\log t$, where $\ell$ is a positive integer. Our method circumvents this restriction and can handle all families of functions of the form $\sum_{i=1}^{k} c_it^{a_i}(\log t)^{b_i}$, where $a_i>0$ and $b_i,c_i\in \R$ (assuming, of course, that the linear combinations of the involved functions satisfy either \eqref{P1} or \eqref{P2}). However, our method has a drawback. As we stated, there are cases covered in the results of \cite{Richter} that do not follow from the arguments present in this paper. These examples concern functions that grow slower than fractional powers $t^{\delta}$, such as the function $\log^c t$ for $c>0$
or the function $\exp(\sqrt{\log t})$. An example that is not covered by Theorem \ref{pointwise} is the pair of functions $(\log ^2 t, t^{3/2})$. However, this last pair of functions can be covered by the results in \cite{Richter}. We shall discuss the techniques and limitations of our proof in depth below (Subsection 1.3).

Combining Theorem \ref{pointwise} and the results in \cite{tsinas} on characteristic factors, we get a mean convergence result for multiple ergodic averages. Since the seminorm estimates for such averages were established in \cite{tsinas} under particular assumptions on our Hardy field $\mathcal{H}$, these have to be incorporated into our statement. We will not need to use these assumptions anywhere else in this article, however.

\begin{theorem}\label{simple}Let $\mathcal{H}$ be a Hardy field that contains the field $\mathcal{LE}$ of logarithmico-exponential functions and is closed under composition and compositional inversion of functions (when defined). Furthermore, assume that the functions $a_1,...,a_k\in \mathcal{H}$ are as in Theorem \ref{pointwise}. Then, for any measure preserving system $(X,\m,T)$ and any functions $f_1,...,f_k\in L^{\infty}(\m)$, the averages \begin{equation}\label{ergodicaverages}
    \frac{1}{N}\sum_{n=1}^{N} T^{\floor{a_1(n)}}f_1\cdot ... \cdot T^{\floor{a_k(n)}}f_k
\end{equation}converge in $L^2(\m)$.
\end{theorem}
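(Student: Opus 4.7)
The plan is to combine the seminorm control of \cite{tsinas} with the Host--Kra structure theorem in order to reduce the convergence question to the case of a single nilsystem, where Theorem \ref{pointwise}(i) applies directly.

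First, I separate the indices for which $a_i$ satisfies \eqref{P2} from those for which it satisfies \eqref{P1}. Since Hardy-field functions are eventually monotonic, any $a_i$ with a real limit has $\floor{a_i(n)}$ eventually equal to a constant $c_i$, so $T^{\floor{a_i(n)}}f_i$ is eventually the fixed function $T^{c_i}f_i\in L^\infty(\m)$, which factors out of the average. By induction on the number of such terms, it suffices to treat the case where every $a_i$ satisfies \eqref{P1}. In this remaining case, the seminorm estimates of \cite{tsinas} provide an integer $s=s(a_1,\ldots,a_k)$ such that if some $\nnorm{f_i}_{s+1}=0$, then the averages \eqref{ergodicaverages} tend to zero in $L^2(\m)$. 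By the standard multilinear telescoping argument, this means the Host--Kra factor $\cZ_s$ is characteristic for \eqref{ergodicaverages}, so we may replace each $f_i$ by $\E(f_i\mid \cZ_s)$ without affecting convergence.

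Next, the Host--Kra structure theorem represents $\cZ_s$ as an inverse limit of $s$-step nilsystems. Exploiting uniform boundedness of the averages by $\prod_i \norm{f_i}_\infty$ and a multilinear $\varepsilon/k$ approximation, I reduce to the case where the system is a single $s$-step nilsystem $(G/\G,m_{G/\G},T_b)$ with $T_b(g\G)=bg\G$, and where each $f_i$ is a continuous function on $G/\G$ (via Stone--Weierstrass). In this setting the average at a point $x\in G/\G$ equals
\begin{equation*}
\frac{1}{N}\sum_{n=1}^{N} f_1\big(b^{\floor{a_1(n)}}x\big)\cdots f_k\big(b^{\floor{a_k(n)}}x\big),
\end{equation*}
and Theorem \ref{pointwise}(i), applied with $b_1=\cdots=b_k=b$, $x_1=\cdots=x_k=x$ and the continuous test function $F(y_1,\ldots,y_k)=f_1(y_1)\cdots f_k(y_k)$ on $(G/\G)^k$, yields convergence for every $x$. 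Uniform boundedness together with dominated convergence then upgrades pointwise convergence to $L^2(m_{G/\G})$-convergence, which transfers to $L^2(\m)$-convergence on the original system via the factor map onto $\cZ_s$.

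The main obstacle will be stitching the seminorm control of \cite{tsinas} together with the preliminary reduction removing the \eqref{P2}-type terms, so that the family of surviving functions still satisfies the precise hypotheses under which those seminorm estimates are established; any rearrangement of linear dependencies among the remaining $a_i$ must be tracked carefully. A secondary technical point is making the inverse-limit approximation on $\cZ_s$ uniform in $N$ when several of the $f_i$ are simultaneously approximated by continuous functions pulled back from a common finite-dimensional nilfactor; this is standard but should be written out with care.
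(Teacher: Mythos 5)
Your overall blueprint — seminorm control, Host--Kra structure theorem, reduction to a nilsystem, apply Theorem \ref{pointwise}(i) — is the right high-level strategy and matches the paper. However, there is a genuine gap in your handling of the seminorm step, and it is exactly the difficulty you flag at the end but do not resolve.

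The issue is this: after you discard the $a_i$ that individually satisfy \eqref{P2}, you claim the remaining family falls under the scope of the seminorm estimates of \cite{tsinas}. But Proposition \ref{factors} has two hypotheses: each $a_i$ must dominate $\log t$, \emph{and} each pairwise difference $a_i-a_j$ must dominate $\log t$. Your reduction secures the first (since \eqref{P1} with $p=0$ gives $|a_i(t)|\succ t^\e$), but not the second. The hypothesis of Theorem \ref{pointwise} only demands that nontrivial linear combinations satisfy \eqref{P1} \emph{or} \eqref{P2}, so it is perfectly permissible to have, say, $a_1(t)=t^{3/2}$ and $a_2(t)=t^{3/2}+1/t$: both satisfy \eqref{P1}, yet $a_1-a_2\to 0$ and is in particular dominated by $\log t$. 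So the seminorm estimate simply does not apply to the surviving family, and the telescoping step cannot be launched. Moreover, one cannot just ``remove'' $a_2$ because the floor functions $\floor{a_1(n)}$ and $\floor{a_2(n)}$ do not differ by a constant: writing $a_2(t)=a_1(t)+c+x(t)$ with $x\to 0$, the identity $\floor{a_2(t)}=\floor{a_1(t)}+\floor{c}+e(t)$ introduces a bounded but genuinely $n$-dependent error $e(t)\in\{0,\pm 1,\pm 2\}$ driven by whether $\{a_1(t)\}$ lands near the carry threshold. This is where the paper invests most of its work: it decomposes the average over the level sets $A_z=\{n:e(n)=z\}$, approximates $\mathbf 1_{A_z}(n)$ by continuous functions of $\{a_1(n)\}$ (and $\{x(n)\}$) using equidistribution of $a_1(n)$ modulo $1$, then passes to trigonometric polynomials; the upshot is an exponential weight $e(l_1a_1(n)+\dots+l_ka_k(n))$ in front of the remaining nilsystem average, which the paper absorbs by lifting to a nilmanifold of the form $\T\times X$ and invoking Theorem \ref{pointwise} there.

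So what is missing from your proposal is not a technical ``stitching'' detail but a substantive sub-argument: the reduction of the colliding-growth case to a weighted nilsystem average and the justification that the weight is admissible. Without it, the seminorm hypothesis fails and the proof does not go through as written. The rest of your outline — ergodic decomposition, inverse-limit approximation on $\mathcal{Z}_s$, Stone--Weierstrass and the pointwise/dominated-convergence upgrade, lifting continuous $f_i$ to a product nilmanifold and invoking Theorem \ref{pointwise} — is consistent with the paper and would be sound once the gap above is filled.
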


 An example of a Hardy field that satisfies the above property is the Hardy field of Pfaffian functions (for the definition, see, for instance, \cite[Section 2]{tsinas}).

It follows from the results in \cite{tsinas} that, if the functions $a_1,...,a_k$ are as in Theorem \ref{equidistribution} (actually, the $t^{\e}$ term can be replaced with the optimal term $\log t$), then for any ergodic measure preserving system $(X,\m,T)$ and bounded functions $f_1,...,f_k$, the averages $$ \frac{1}{N}\sum_{i=1}^{N} f_1(T^{\floor{a_1(n)}}x)\cdot...\cdot f_k(T^{\floor{a_k(n)}}x)$$ converge in the $L^2$-sense to the product of the integrals $\int f_1 d\m\cdot ...\cdot  \int f_k d\m$. The methods used in that article cannot work when there are linear dependencies between the functions $a_1,...,a_k$ (since they rely on the joint ergodicity results from \cite{Frajoint}). Therefore, in order to prove Theorem \ref{simple}, we have to show that the Host-Kra factors are characteristic for these averages, reduce the problem to nilmanifolds using the Host-Kra structure theorem (see Theorem \ref{structure} in Section \ref{background}) and then tackle the problem of mean convergence in nilmanifolds. The first part of the above argument follows from the results in \cite{tsinas} (see Proposition \ref{factors}), while Theorem \ref{pointwise} gives the stronger result of pointwise convergence when the system $(X,\m, T)$ is a nilsystem. We comment here that the optimal restrictions on the functions $a_1,...,a_k$ in Theorem \ref{simple} are expected to be that the functions are good for convergence when the system $(X,\m, T)$ is a rotation on some torus $\T^d$. A refuted conjecture of Frantzikinakis appears in \cite[Problem 22]{Fraopen}, although the statement needs to be changed to the following (personal communication):
\begin{conjecture}
Let $a_1,...,a_k$ be functions in $\mathcal{LE}$ (or any other Hardy field) such that for all real numbers $t_1,...,t_k\in [0,1)$, the averages \begin{equation}\label{exponentialaverages}
    \frac{1}{N}\sum_{n=1}^{N} e(t_1\floor{a_1(n)}+\dots +t_k\floor{a_k(n)})
\end{equation}converge. Then, for any measure preserving system $(X,\m,T)$ and functions $f_1,...,f_k\in L^{\infty}(\m)$, the averages \begin{equation}
    \frac{1}{N}\sum_{n=1}^{N} T^{\floor{a_1(n)}}f_1 \cdot... \cdot  T^{\floor{a_k(n)}}f_k
\end{equation}converge in $L^2(\m)$ and, if $(X,\m,T)$ is a nilsystem and the functions $f_1,...,f_k$ are continuous, then those averages converge pointwise everywhere.
\end{conjecture}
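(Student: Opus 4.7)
The plan is to carry out the same two-step reduction scheme that underlies Theorem \ref{simple}, but with the weaker hypothesis in place. First, using the seminorm estimates from \cite{tsinas} (or an appropriate extension of them tailored to the conjectural hypothesis) in tandem with the Host-Kra structure theorem, the characteristic factor for the averages in \eqref{ergodicaverages} should again be a Host-Kra nilfactor $Z_s$ of bounded step. This reduces the $L^2$ convergence problem for arbitrary measure-preserving systems to inverse limits of nilsystems, and then to individual nilsystems via a routine approximation. The pointwise statement on nilsystems would, by bounded convergence, automatically deliver the $L^2$ statement for general systems, so both halves of the conjecture collapse to the nilmanifold pointwise statement.

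On a nilsystem $X=G/\Gamma$, I would study the orbit $b_1^{\floor{a_1(n)}}\cdots b_k^{\floor{a_k(n)}}\Gamma$ by the same route as in Theorem \ref{equidistribution}: apply the Green-Tao quantitative equidistribution theorem for polynomial orbits, and decompose continuous functions via vertical Fourier characters on $X$. This reduces the convergence problem to controlling exponential sums of the form $e(\psi(\floor{a_1(n)},\ldots,\floor{a_k(n)}))$ where $\psi$ is a polynomial map to the torus arising from Mal'cev coordinates. The natural strategy to bootstrap the toral hypothesis (which controls purely linear phases in the $\floor{a_i(n)}$) up to these polynomial phases is a van der Corput / PET-style induction on the degree of $\psi$, repeatedly differencing in $n$ so as to collapse polynomial combinations into linear ones that the hypothesis can handle.

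The main obstacle is precisely this bootstrapping. The hypothesis provides only \emph{convergence} (not convergence to an explicit limit, and not equidistribution) of \emph{linear} exponential sums, whereas the Green-Tao factorization inherently produces polynomial nilsequences whose analysis demands polynomial-level control. A single van der Corput step introduces cross-terms like $e(t(\floor{a_i(n+h)}-\floor{a_i(n)}))$, and there is no a priori reason the class of tuples satisfying the conjectural hypothesis is closed under such operations. The most plausible route to a complete proof is to establish a dichotomy: any Hardy-field tuple with polynomial growth whose linear Weyl sums converge for every $(t_1,\ldots,t_k)$ must in fact satisfy, for each nontrivial combination in $\mathcal{L}(a_1,\ldots,a_k)$, either condition \eqref{P1} or \eqref{P2}, so that Theorem \ref{pointwise} applies directly. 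Proving such a dichotomy appears to require a finer classification of Hardy-field functions with ``just barely'' convergent exponential averages than is currently available, and this is the step I expect to be the true sticking point, explaining why the statement is posed as a conjecture rather than a theorem.
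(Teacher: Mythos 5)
The statement you were asked to address is posed in the paper as an open \emph{conjecture}, and the paper contains no proof of it; there is therefore no argument of the author's against which to match yours. Your proposal is, appropriately, a strategy discussion rather than a proof, and your conclusion that the statement is genuinely open is correct. The early parts of your sketch (reduction via Host--Kra seminorm estimates to nilfactors, then to pointwise convergence on individual nilmanifolds, followed by Green--Tao quantitative equidistribution and a vertical-character decomposition) mirror the route the paper takes for Theorem~\ref{simple}, and the difficulty you locate --- bootstrapping linear Weyl-sum control up to polynomial nilsequence estimates --- is real and is acknowledged implicitly in Section~1.3, where the author explains why $t^{\e}$ appears in \eqref{P1} in place of the conjecturally optimal $\log t$.

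However, the ``most plausible route'' you propose is provably unavailable. You conjecture a dichotomy: that the Weyl-sum hypothesis should force every nontrivial element of $\mathcal{L}(a_1,\ldots,a_k)$ to satisfy \eqref{P1} or \eqref{P2}, so that Theorem~\ref{pointwise} would apply directly. This dichotomy is false, and the paper itself contains the counterexample in remark form: the function $a(t)=\log^2 t$. It is equidistributed $\bmod\ 1$ by Boshernitzan's criterion (Theorem~\ref{Bosh}, since $|\log^2 t - p(t)|/\log t \to\infty$ for every $p\in\Q[t]$), and one can check that $\E_{1\le n\le N}\,e\big(s\lfloor \log^2 n\rfloor\big)$ converges for every $s\in[0,1)$, so the one-variable version of the conjectural hypothesis holds. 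Yet taking $p\equiv 0$ in \eqref{P1} gives $\log^2 t/t^{\e}\to 0$ for every $\e>0$, so \eqref{P1} fails, and \eqref{P2} fails since $\log^2 t\to\infty$. The author explicitly flags the pair $(\log^2 t, t^{3/2})$ as lying outside the reach of Theorem~\ref{pointwise} precisely for this reason, while noting that it \emph{is} handled in \cite{Richter} by a different (van der Corput--based) method. So a proof of the conjecture cannot sidestep sub-fractional functions by reducing to \eqref{P1}/\eqref{P2}; it would have to extend the Taylor-approximation scheme (which degenerates when $L(N)^{k}a^{(k)}(N)\to 0$, making the polynomial model on the window $[N,N+L(N)]$ essentially constant and the Lipschitz bounds useless, as the paper's remark at the end of the worked example explains) to cover that regime directly, or else combine the two methods. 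That is the genuine sticking point, and your honest assessment that the statement currently warrants ``conjecture'' status is the right one.
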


\begin{remark}It can be shown that the above condition on the exponentials of the involved sequences is not sufficient if we replace the Hardy sequences with other, more general, sequences. Indeed, \cite[Theorem B]{FraLeWie} provides an example of a sequence $a(n)$, such that for $a_1(n)=a(n)$ and $a_2(n)=2a(n)$, the averages \begin{equation*}
    \frac{1}{N}\sum_{n=1}^{N} e(t_1a_1(n)+t_2a_2(n))
\end{equation*}converge for any $t_1,t_2\in [0,1)$, but the ergodic averages \begin{equation*}
    \frac{1}{N}\sum_{n=1}^{N} T^{a_1(n)}f_1\cdot T^{a_2(n)}f_2
\end{equation*}do not converge in mean.
\end{remark}

\subsection{Short overview of the proof and additional remarks}
 The main idea of the proof is that functions in $\mathcal{H}$ of polynomial growth can be approximated sufficiently well by polynomials in short intervals.
The equidistribution properties of polynomial sequences in nilmanifolds, even on small intervals, are well understood from \cite{Green-Tao}.
We use these quantitative results of Green-Tao (see Theorem \ref{quantitative} in Appendix \ref{nilpotent crap}) to show that the averages over small intervals are "close" to the integral of a continuous function in our nilmanifold.
This approach was used in \cite{Fraeq} to show that (following the notation of Theorem \ref{equidistribution}) the sequence $b^{a(n)}$ is equidistributed for all $b\in G$ and any function $a(n)$ satisfying \eqref{P_0}. In the case that we need to cover here, there are more technical difficulties in the proof, since we have to find polynomial expansions for several functions simultaneously, which also tend to be of increased complexity (for example, choosing the length of the short intervals is fairly straightforward in the case of one function, but not when someone deals with several functions in $\mathcal{H}$). This idea of using a common polynomial expansion was also used recently by the author in order to establish the corresponding problem of finding characteristic factors for ergodic averages involving Hardy field iterates. This approach is well suited to handle functions in the range $t^k\prec a(t)\prec t^k\log t$, which were previously not known in the literature. Some additional care needs to be taken in order to separate polynomial functions and functions that we call "strongly non-polynomial" (see Definition \ref{growthdefinitions}). This is an elementary argument and is carried out in Lemma \ref{growthdecomposition} in Appendix \ref{hardyappendix}. A similar "decomposition" idea is present in \cite[Lemma A.3]{Richter} (also used in \cite{tsinas}), but we cannot use the exact same decomposition here, because some information on the linear combinations of our functions would be lost.

Our argument differs quite a bit from the methods used in \cite{Richter}, which relied on applications of the van der Corput inequality as a means of "complexity reduction and qualitative equidistribution results on nilmanifolds. In simplistic terms, this replaces the issue of studying equidistribution for a function $a(t)\in \mathcal{H}$ by the problem of studying equidistribution properties for the derivatives $a', a'', a'''$ and so on. This cannot be used to cover, for example, functions in the range $t\prec a(t)\ll t\log t$, because the derivative $a'$ must satisfy $a'(t)\ll \log t$, which does not have good equidistribution properties even on the 1-dimensional torus $\T$. As we mentioned above, we can sidestep this situation, but 
our argument also has limitations. More precisely, we do not cover functions that grow very slowly (which we call sub-fractional functions). A sample of a "slow-growing" function that we cannot handle is the function $\log^c t$ for $c>1$ (for instance, the pair $(\log^2t, t\log t)$ is not covered by Theorem \ref{equidistribution}). The main reason is that when we pass to averages on small intervals, these functions become approximately equal to a constant and our method of using the Taylor expansion breaks down. That explains the existence of the function $t^{\e}$ in \eqref{P1} instead of the term $\log t$, which is speculated to be optimal. 

In addition, we do not cover the case where some of the functions $a_i(t)$ are real polynomials, because the reduction to a statement on connected simply connected Lie groups becomes a lot more complicated. For example, consider a nilmanifold $X=G/\G$ with $G$ connected and simply connected and elements $b_1,b_2\in G$ that commute. It is not clear how to describe sufficiently well the orbit of the sequence $b_1^{n^{3/2}}b_2^{n^2}\G$ on $X$. On the one hand, invoking Leibman's theorem on polynomial orbits (Theorem \ref{Leibmantheorem}), we can describe the orbit $\overline{\{b^{n^2}\G\}}$, while the nilmanifold $\overline{b_1^{\R}\G}$ (which can be shown to be equal to the closure of the orbit $b^{n^{3/2}}\G$ by Theorem \ref{Fratheorem}) can be expressed in a nice form by Ratner's theorem (see also Lemma \ref{Ratnerlemma}). However, we do not know how to accomplish this for their product $b_1^{n^{3/2}}b_2^{n^2}\G$. For example, we expect that this sequence equidistributes on some subnilmanifold $Y$ (possibly after restricting to an arithmetic progression), but we cannot get any information on the underlying Lie group that defines $Y$, which is necessary when applying Theorem \ref{quantitative}.

A simple argument reduces our problem to the case when the Lie group $G$ is connected and simply connected. Namely, we will prove 
Theorem \ref{pointwise} under the above connectedness assumptions. We sketch this reduction in Appendix \ref{nilpotent crap} (at the end of subsection \ref{B1 subsection}).
Therefore, we make the following convention:
\begin{align}\label{convention}\tag{$\star$}
    \textit{For the rest of the article  }& \textit{up until the Appendix, we make the assumption that}\\
     \nonumber \textit{all nilpotent Lie groups}&  \textit{ are always connected and simply-connected}.
\end{align}

\subsection*{Notational conventions}
Throughout this article, we denote by $\N=\{1,2,...\}$ the set of natural numbers. We denote $\T^d=\R^d/\Z^{d}$, $e(t)=e^{2\pi it}$, while $\norm{x}_{\T}=d(x,\Z)$ and $\{x\}$ denote the distance of $x$ from the nearest integer and the fractional part of $x$ respectively. For an element ${\bf x}=(x_1,...,x_k)$ in $\R^k$, we denote $|{\bf x}|= |x_1|+\dots +|x_k|$. Lastly, we denote by $\bol_A$ the characteristic function of a set $A$.

For two sequences $a_n,b_n$, we say that $b_n$ dominates $a_n$ and write $a_n\prec b_n$ or $a_n=o(b_n)$, when $a_n/b_n$ goes to 0, as $n\to+\infty$. In addition, we write $a_n\ll b_n$ or $a_n=O(b_n)$, if there exists a positive constant $C$ such that $|a_n|\leq C|b_n|$ for large enough $n$. When we want to denote the dependence of this constant on some parameters $h_1,...,h_k$, we will use the notation $a_n=O_{h_1,...,h_k}(b_n)$. We use identical notation for asymptotic relations between functions on some real variable $t$, where we understand that these hold when we take $t\to+\infty$.

 Finally, we use the symbol $\E$ to denote averages (over a range that will be implicit by the corresponding subscripts each time). Throughout the rest of the article, we use the letters $p,q$ to denote polynomials and $\chi$ is used to denote a horizontal character. We will use $b_1,b_2,...,b_k$ or $u_1,u_2,...,u_k,w_1,...,w_k$ in the proofs to denote elements of a nilpotent Lie group $G$.

\subsection{Acknowledgements}
I would like to thank my PhD advisor Nikos Frantzikinakis for many helpful discussions. I would also like to thank the anonymous referee for pointing out corrections in the previous versions of the paper and for several additional valuable suggestions that improved the overall presentation of the article.

\section{Background material}\label{background}

\subsection{Measure preserving systems and Host-Kra structure theorem} A measure preserving system is a quadruple $(X,\X,\m,T)$, where $(X,\X,\m)$ is a Lebesgue probability space and $T$ is an invertible measure preserving map, that is $\m(T^{-1}(A))=\m(A)$ for all $A\in \X$. It is called ergodic if all the $T$-invariant functions are constant.
For the purposes of this article, a factor of the system $(X,\mathcal{X},\m, T)$ is a $T$-invariant sub-$\sigma$-algebra of $\mathcal{X}$. However, when there is no confusion, we will omit the $\sigma$-algebra $\mathcal{X}$ from the quadruple $(X,\mathcal{X},\m, T)$.

Let $(X,\m,T)$ be a measure preserving system and let $f\in L^{\infty}(\m)$. We define the Host-Kra uniformity seminorms inductively as follows:\begin{equation*}
    \nnorm{f}_{0,T}=\int f \ d\m
\end{equation*}and, for $s\in \N$, \begin{equation}\label{uniformitynorms}
    \nnorm{f}_{s+1,T}^{2^{s+1}}=\lim\limits_{H\to\infty}\ \underset{0\leq h\leq H}{\E} \nnorm{\bar{f}\cdot T^h f}_{s,T}^{2^s}.
\end{equation}
In the ergodic case, the existence of these limits and the fact that these quantities are indeed seminorms was established in \cite{Host-Kra1}. In the same article, it was shown that these seminorms give rise to a factor $\mathcal{Z}_{s-1}(X)$ of $X$ for every $s\geq 1$, which is characterized by the following relation: \begin{equation*}
    f\perp L^2(\mathcal{Z}_{s-1}(X))\iff \nnorm{f}_{s,T}=0.
\end{equation*}
The significance of these factors hinges on the following remarkable structure theorem of Host-Kra \cite{Host-Kra1}:

\begin{customthm}{E}[Host-Kra]
\label{structure}
Let $(X,\m,T)$ be an ergodic system. Then, the factor $\mathcal{Z}_s(X)$ is an inverse limit of $s$-step nilsystems.
\end{customthm}
The last property implies that there exist $T$-invariant sub-$\sigma$-algebras $\mathcal{Z}_s(n), n\in \N$ that span $\mathcal{Z}_s$, such that the factor $\mathcal{Z}_s(n)$ is isomorphic as a system to an $s$-step nilsystem.

\subsection{Background on Hardy fields}

Let $\mathcal{B}$ denote the set of germs at infinity of real-valued functions defined on a half-line $[x,+\infty)$. Then, $(\mathcal{B},+,\cdot)$ is a ring, and a sub-field $\mathcal{H}$ of $\mathcal{B}$ that is closed under differentiation is called a Hardy field. We will say that $a(n)$ is a Hardy sequence, if for $n\in \N$ large enough we have $a(n)=f(n)$ for a function $f\in\mathcal{H}$.

 Any two functions $f,g\in\mathcal{H}$ with $g$ not identically zero are comparable, that is the limit \begin{equation*}
    \lim\limits_{t\to\infty} \frac{f(t)}{g(t)}
\end{equation*}exists and thus it makes sense to compare their growth rates. In addition, every non-constant function in $\mathcal{H}$ is eventually monotone and, therefore, has a constant sign eventually. In Appendix \ref{hardyappendix}, we have collected some lemmas about growth rates of functions in $\mathcal{H}$, which will be used frequently throughout the proofs. The proofs of these lemmas can be found in \cite{tsinas}, so we shall omit most of them.

We define below some notions that will be used repeatedly throughout the remainder of the paper.
\begin{definition}\label{growthdefinitions}
Let $a$ be a function in $\mathcal{H}$.\\
a) The function $a$ has polynomial growth, if there exists a positive integer $k$ such that $f(t)\ll t^k$. The smallest positive integer $k$ for which this holds will be called the degree of $a$.\\
b) The function $a$ is called sub-linear if $a(t)\prec t$.\\
c) The function $a$ is called sub-fractional if $a(t)\prec t^{\e}$, for all $\e>0$.\\
d) The function $a$ is called strongly non-polynomial if, for any positive integer $k$, we have that the functions $a(t)$ and $t^k$ have distinct growth rates. 
\end{definition}
 If $a\in\mathcal{H}$ has polynomial growth, we will also say that the corresponding sequence $a(n)$ has polynomial growth throughout the article. To understand the definition, consider the functions $a_1(t)= t^{2/3}$, $a_2(t)=\log^2 t$, $a_3(t)= t+t^{1/2}$ and $a_4(t)=\exp(t)$. The first two functions are sub-linear, but the functions $a_3,a_4$ are not. The function $a_2(t)$ is the only sub-fractional function among the four functions (it grows slower than all fractional powers), while the strongly non-polynomial functions are $a_1,a_2$ and $a_4$ (note that $a_3$ grows like the polynomial $p(t)=t$). The function $a_4$ does not have polynomial growth.

\begin{remark}The definition of strongly non-polynomial presented here is slightly different than the one given in \cite{tsinas}. The definition in that article was that we have the growth relation $t^k \prec a(t)\prec t^{k+1}$ for $k\in \N$, which imposes polynomial growth on our function. In addition, our new definition also allows the inclusion of functions $a(t)$ such that $\lim\limits_{t\to+\infty}|a(t)|=0$, while the old one excludes these functions (we do this solely for technical reasons).
\end{remark}


\section{Preparations for the proof}\label{preparations proof}

In this section, we will collect some lemmas and make some reductions, which will be useful when we delve into the proof of Theorems \ref{equidistribution} and \ref{pointwise} in the next section. In addition, we provide a specific example, which illustrates the central ideas of the proof of Theorem \ref{pointwise} and does not involve a lot of computations. 

First of all, we present a lemma, which appears in \cite[Lemma 3.3]{Fraeq}. We will use this lemma to reduce our problem of studying the long averages over an interval $[1, N]$ (like those appearing in Theorem \ref{pointwise}) to averages in short intervals. Its proof is elementary and so we omit it.

\begin{lemma}\label{short intervals}
 Let $(a(n))_{n\in \N}$ be a bounded sequence of complex numbers. Assume that \begin{equation*}
     \lim\limits_{N\to+\infty} \underset{N\leq n\leq N+L(N)}{\E} a(n) =  \alpha
 \end{equation*}for some positive function $L(t)$ with $1\prec L(t)\prec t$. Then, we also have \begin{equation*}
     \lim\limits_{N\to+\infty} \underset{1\leq n\leq N}{\E} a(n) =  \alpha.
 \end{equation*}
\end{lemma}

\subsection{An example of convergence}

 Assume $X=G/\G$ is a nilmanifold with $G$ connected and simply connected. We will show that the averages \begin{equation*}
    \underset{1\leq n\leq N}{\E} f(b_1^{n^{3/2}}x) \cdot g(b_2^{n\log n}x)
\end{equation*}converge for any $x\in X$, where $b_1,b_2\in G$. 

Using Lemma \ref{short intervals}, it suffices to show that the averages \begin{equation*}
     \underset{N\leq n\leq N+L(N)}{\E} f(b_1^{n^{3/2}}x) \cdot g(b_2^{n\log n}x) 
\end{equation*}converge, for some sub-linear function $L(t)$.
Passing to the nilmanifold $X\times X$, we see that our problem reduces to showing that the averages \begin{equation*}
     \underset{N\leq n\leq N+L(N)}{\E} F(b_1^{n^{3/2}} b_2^{n\log n}x)
\end{equation*}converge for any nilmanifold $X=G/\G$, commuting elements\footnote{When we pass to the product $X\times X$, we have to study the actions of the elements $(b_1,e_G)$ and $(e_G,b_2)$, which clearly commute.} $b_1,b_2\in G$ and function $F\in C(X)$. Due to density, we can actually pick $F\in \text{Lip}(X)$. We provide more details for this deduction in the next section (after Proposition \ref{impprop}).

Let $X'$ denote the subnilmanifold $\overline{b_1^{\R} b_2^{\R}\G}$ of $X$.
By Lemma \ref{Ratnerlemma}, this set is indeed a subnilmanifold of $X$ and has a representation as $H/\Delta$, with $H$ connected, simply connected and containing all elements $b_1^s$ and $b_2^s$ for any $s\in \R$. In this example, we will also assume that $X'=\overline{b_1^{\Z}b_2^{\Z}\G}$. In the main proof, we will use Lemma \ref{ergodicaction} to reduce the general case of the theorem to this one.

Using the Taylor expansion around the point $N$, we can write \begin{equation*}
    (N+h)^{3/2}=N^{3/2}+\frac{3}{2}hN^{1/2}+\frac{3h^2}{8N^{1/2}}-\frac{h^3}{16N^{3/2}}+\frac{3h^4}{128\xi_h^{5/2}}, \ \ \text{ for some } \ \ \xi_h\in [N,N+h]
\end{equation*}for every $0\leq h\leq L(N)$. If we choose $L(t)$ to satisfy \begin{equation*}
t^{1/2}    \prec L(t) \prec t^{5/8}
\end{equation*}then the last term in the above expansion is smaller than $o_N(1)$, while the second to last term is unbounded. Similarly, we can write \begin{equation*}
    (N+h)\log (N+h)=N\log N +h(\log N+1)+\frac{h^2}{2N}-\frac{h^3}{6\psi_h^2},  \ \ \text{ for some } \ \ \psi_h\in [N,N+h].
\end{equation*}If we choose again $L(t)$ to satisfy \begin{equation*}
t^{1/2}    \prec L(t) \prec t^{2/3}, 
\end{equation*}we can show that the last term is $o_N(1)$, while the $h^2$ term is unbounded. For instance, we can choose $L(t)=t^{3/5}$ and both growth conditions that we imposed will be satisfied.

Since the function $F$ is continuous, we can disregard the highest order terms in the above expansion since they are both $o_N(1)$. Our problem reduces to showing that the averages \begin{equation*}
    \underset{0\leq h\leq L(N)}{\E} F(b_1^{N^{3/2}+\frac{3}{2}hN^{1/2}+\frac{3h^2}{8N^{1/2}}-\frac{h^3}{16N^{3/2}}}b_2^{N\log N +h(\log N+1)+\frac{h^2}{2N}}x)
\end{equation*}converge. 
For the sake of simplicity, we will show that the averages \begin{equation*}
     \underset{0\leq h\leq L(N)}{\E} F(b_1^{\frac{h^3}{N^{3/2}}}b_2^{\frac{h^2}{N}}x)
\end{equation*}converge, since both of these statements follow from the same arguments. For convenience, we will also assume that $x=\G$.

Let $\delta>0$. We consider the finite sequence \begin{equation*}
    (v(h)\G)_{0\leq h\leq L(N)}= \Big(  b_1^{\frac{h^3}{N^{3/2}}}b_2^{\frac{h^2}{N}}\G       \Big)_{0\leq h\leq L(N)}
\end{equation*}and we show that, if $N$ is large enough, then it is $\delta$-equidistributed on the subnilmanifold $X'=\overline{b_1^{\R}b_2^{\R}\G}$ of $X$. It is apparent that $v(n)\G$ is a polynomial sequence in $X'$. 
We consider the horizontal torus $Z$ of $X'$, which is isomorphic to some $\T^{d}$ ($d\in \N$) and we also let $\pi$ denote the projection map from $X'$ to $ Z$.
If the given sequence is not $\delta$-equidistributed (for a fixed value of $N$), we can invoke Theorem \ref{quantitative} to find a positive constant $M=M(X',\delta) $ and a non-trivial horizontal character $\chi_N$ of modulus at most $M$ and such that \begin{equation*}
    \bignorm{\chi_N(\pi (v(h)\G))}_{C^{\infty}[L(N)]}\leq M.
\end{equation*}

Suppose $\chi_N$ descends to the character $$(t_1,...,t_d)\to e(k_{1,N}t_1+\dots +k_{d,N}t_d)$$ on $\T^d$, where $k_{1,N},...,k_{d,N}$ are integers. The fact that the modulus is bounded by $M$ implies that $$|k_{1,N}|+\dots+ |k_{d,N}|\leq M.$$ Let us also write $\pi(b_1\G)=(x_1,...,x_d)$ and $\pi(b_2\G)=(y_1,...,y_d)$. Then, the last inequality implies that \begin{equation}\label{exampleequation}
    \bignorm{ e\big(\frac{h^3}{N^{3/2}}\sum_{i=1}^{d} k_{i,N}x_i    +\frac{h^2}{N}\sum_{i=1}^{d}  k_{i,N} y_i \big)    }_{C^{\infty}[L(N)]}\leq M.
    \end{equation}

Assume there are infinitely many $N$ for which this holds.
Since there are only finitely many possible choices for the numbers $k_{1, N},...,k_{d, N}$ above, we conclude that there exists a character $\chi$ such that $\norm{\chi(\pi(a(h))\G )} _{C^{\infty}[L(N)]}\leq M$ holds for infinitely many $N\in \N$. Then, we rewrite \eqref{exampleequation} ($k_i$ are some integers independent of $N$) as \begin{equation*}
    \bignorm{ e\big(\frac{h^3}{N^{3/2}}\sum_{i=1}^{d} k_{i}x_i    +\frac{h^2}{N}\sum_{i=1}^{d}  k_{i} y_i \big)    }_{C^{\infty}[L(N)]}\leq M,
    \end{equation*}and this inequality holds for infinitely many $N$.

The definition of the $C^{\infty}[L(N)]$ norms implies that we have the relations \begin{equation*}
    L(N)^3\bignorm{ \frac{\sum_{i=1}^{d} k_ix_i  }{N^{3/2}} }_{\T}\leq M
\end{equation*}and 
\begin{equation*}
    L(N)^2\bignorm{\frac{\sum_{i=1}^{d}  k_i y_i  }{N}}_{\T}\leq M.
\end{equation*}for infinitely many $N$. Due to our choice of the function $L(N)$, these relations fail for $N$ sufficiently large unless \begin{equation*}
    \sum_{i=1}^{d} k_ix_i  \in \Z \ \ \text{ and }  \sum_{i=1}^{d}  k_i y_i  \in \Z.
\end{equation*}This implies that $\chi \circ \pi (b_1\G)=\chi\circ \pi(b_2\G)=0$ and, consequently, we must also have $\chi\circ \pi(b_1^mb_2^n\G)=0$ for any $m,n\in \Z$. Since elements of this form are dense in $\overline{b_1^{\R}b_2^{\R}\G}$ by our initial hypothesis, we get that $\chi$ must be the trivial character, which is a contradiction.

    In conclusion, we have established that the sequence $(v(h)\G)_{0\leq h\leq L(N)}$ is $\delta$-equidistributed for large enough $N$ on $X'=\overline{b_1^{\R}b_2^{\R}\G}$. The result now follows by sending $\delta\to 0$. We also notice that the limit of the averages is $\int_{X'} F \ d m_{X'}$.

\begin{remark}\label{diophantine}
We describe briefly here why we have to use the $t^{\e}$ term in \eqref{P1} instead of the conjectured optimal term $\log t$. Assuming we had the functions $\log^2 t$ and $t\log t$ in this example, then for any choice of the sub-linear function $L(t)$ that would give a good polynomial approximation for the function $t\log t$, we would have $$\max_{0\leq h\leq L(N)}|\log^2(N+h)-\log^2 N|=o_N(1),$$ which suggests that the sequence $\log^2 n$ is essentially constant in the small intervals $[N, N+L(N)]$. If we proceed exactly as in the above argument, the best we can actually show is that \begin{equation*}
    \big| \underset{N\leq n\leq N+ L(N)}{\E}F(b_1^{\log^2 n}b_2^{n\log n} \G)   -\int_{Y_2} F(b_1^{\log^2 N} y)\ dm_{Y_2}(y)\big|\leq \delta \norm{F(b_1^{\log^2 N }\cdot) }_{\text{Lip}(Y_2)}
\end{equation*}for large enough $N$, where $Y_2=\overline{b_2^{\R}\G}$ and $F(b_1^{\log^2 N }\cdot)$ denotes the function $y\to F(b_1^{\log^2 N }y)$ defined on the nilmanifold $Y_2$. However, the Lipschitz norm above is of the order $\log^2N \norm{F}_{\text{Lip}(X)}$, which diverges as $N\to+\infty$, so this bound cannot be useful for any purposes. 

Another approach would be to utilize the fact that the parameter $M$ in Theorem \ref{quantitative} is of the form $\delta^{-O(1)}$, namely we have bounds that are polynomial in $\delta$. Thus, one could allow the parameter $\delta$ to vary with $N$. For instance, establishing a bound of the form $(\log N)^{-(2+\e)} \norm{F(b_1^{\log^2 N }\cdot) }_{\text{Lip}(Y_2)}$ in place of the term $\delta \norm{F(b_1^{\log^2 N }\cdot) }_{\text{Lip}(Y_2)}$\footnote{It would actually suffice to obtain this statement for almost all $N\in \N$ in the sense of natural density.} (namely, showing that our sequence is $(\log N)^{-(2+\e)}$-equidistributed) on the right-hand side of the above equation leads to a solution to the more general problem. However, any bound of this type is incorrect in general. Indeed, assume that the horizontal torus of $\overline{b_2^{\R}\G}$ was $\T^2$ and also let $(b_{2,1},b_{2,2})\in \T^2$ denote the image of the element $b_2\G$ under the projection map. Following the same approximations as the ones in the example, we would like to show that the finite polynomial sequence $b_2^{h^2/N}\G$, where $0\leq h\leq L(N)$, is $ (\log N)^{-(2+\e)}$-equidistributed for almost all $N\in \N$ and for some suitable sub-linear function $L(t)$ satisfying only $L(t)\succ t^{1/2}$. Then, an application of Theorem \ref{quantitative} implies that if this assertion does not hold, then there exists a positive constant $C$ and a horizontal character $\chi$ of modulus at most $\log^C N$, such that 
$$\norm{\chi(b_2^{h^2/N}\G)}_{C^{\infty}(L(N))}\leq \log^C N.$$ Equivalently, there exist integers $k_1,k_2$ with $|k_1|+|k_2|\leq \log^C N$ such that
$$L^{2}(N)\bignorm{\frac{k_1b_{2,1}+k_2b_{2,2}}{N}}_{\T}\leq \log^C N.$$ Thus, we would get a contradiction if we showed that $$\min_{|k_1|,|k_2|\leq \log^C N} |k_1b_{2,1}+k_2b_{2,2}|\geq \frac{N\log^C N}{L^2(N)}$$ holds for $N$ in a set of density 1. However, we note that bounds like the above depend on the diophantine properties of the numbers $b_{2,1},b_{2,2}$. Indeed, let us suppose that $\alpha=\frac{b_{2,1}}{ b_{2,2}}\leq 1$. If we divide by $b_{2,2}$, the last inequality can be rewritten as $$\min_{|k_1|,|k_2|\leq \log^C N }\big|k_1\alpha+k_2\big|\geq \frac{N\log^C N}{|b_{2,2}|L^2(N)}.$$
For a fixed choice of $k_1$, the absolute value is minimized by picking $k_2$ to be the nearest integer to $-k_1\alpha.$ Thus, we would need to show that
$$\min_{ |k_1|\leq \log^C N }\bignorm{k_1\alpha}_{\T}\geq \frac{N\log^C N}{|b_{2,2}|L^2(N)}$$ and we can find $b_{2,1},b_{2,2}\in (0,1)$ for which this inequality fails for all $N$ in a set of positive upper density. A simpler example that avoids the complicated function on the right-hand side of the last equation is to show that
we can find $\alpha\in (0,1)$ for which the inequality $\min_{|k|\leq N}^{}\norm{k\alpha}_{\T}\geq 2^{-n}$ fails for all $N\in \N$ in a set of upper density 1. Indeed, we can construct an $\alpha\in (0,1)$ such that $\liminf\limits_{n\to+\infty}2^{2^n}\norm{n\alpha}_{\T}=0$. Thus, there is a sequence $q_n$ such that $\norm{q_na}_{\T}\leq 2^{-2^{q_n}}$ which implies that $\min_{|k|\leq N}^{}\norm{k\alpha}_{\T}\leq 2^{-2^{q_n}}\leq 2^{N}$ for every $N$ with $q_n\leq N\leq 2^{q_n}$. Thus, the set of $N$ for which the above inequality fails has upper density 1. 
\end{remark}

\subsection{Removing the integer parts}

In this part, we will establish a lemma that practically implies that part a) of Theorem \ref{pointwise} follows from part b) of the same theorem. The fact that part a) of Theorem \ref{equidistribution} follows from part b) of the same theorem is precisely the statement of \cite[Lemma 5.1]{Fraeq}, which is proven using very similar arguments to the proof of Lemma \ref{removerounding} below. If a collection of sequences of real numbers has the property that the averages \begin{equation}\label{pointwisegood}
    \underset{1\leq n\leq N}{\E}\  f_1(b_1^{{a_1(n)}}x_1)\cdot...\cdot f_k( b_k^{{a_k(n)}}x_k)  
\end{equation}converge for all nilmanifolds $X_i=G_i/\G_i$, elements $b_i\in G_i$, points $x_i\in X_i$ and continuous functions $f_i$ defined on $X_i$, we will say that this collection is pointwise good for nilsystems. The notation $b_i^{a_i(n)}$ makes sense here due to the connectedness assumptions we have imposed on the Lie groups $G_i$.

\begin{lemma}\label{removerounding}
Let $a_1(n),...,a_k(n)$ be sequences of real numbers that satisfy the following: \\
    a) The collection $a_1(n),...,a_k(n)$ is pointwise good for nilsystems.\\
    b) For every $1\leq i\leq k$, we have that the sequence $(a_i(n)\Z)_{n\in\N}$ satisfies one of the following:
    \begin{enumerate}
        \item It is equidistributed on $\T$.
        \item It converges to some $c=c(i)\in \T$ different from $0$.
        \item It converges to 0 and the sequence $\{a_i(n)\}-\frac{1}{2}$ has a constant sign eventually.
    \end{enumerate}

Then, the sequences $\floor{a_1(n)},...,\floor{a_k(n)}$ are pointwise good for nilsystems.
\end{lemma}
\begin{remark}
The number $\frac{1}{2}$ in the third condition is arbitrary since we could have used any number $\alpha\in (0,1)$. We primarily use this condition in the following manner: suppose we have a function $f(t)$, which converges monotonically to some $k\in \Z$ as $t\to+\infty$. Then, we clearly have $\norm{f(t)}_{\T}\to 0$ and we also observe that the sequence $\{f(n)\}$ does not not oscillate between intervals of the form $[0,\e]$ and $[1-\e,1)$ (due to the monotonicity assumption). Thus, the sequence $\{f(n)\}-\frac{1}{2}$ will indeed have a constant sign (positive if $f$ increases to $k$ and negative otherwise).
\end{remark}
\begin{proof}

Let $X_i=G_i/\G_i$ be nilmanifolds with $G_i$ connected and simply connected and $b_i\in G_i$. Let $f_1,...,f_k$ be continuous functions defined on $X_1,...,X_k$ respectively. Under the hypotheses of the lemma on the sequences $a_1(n),...,a_k(n)$, we want to show that the averages \begin{equation}\label{floorremove}
     \underset{1\leq n\leq N}{\E}\ f_1(b_1^{\floor{a_1(n)}}x_1)\cdot...\cdot f_k( b_k^{\floor{a_k(n)}}x_k)  
\end{equation}converge for any choice of the $x_i\in X_i$. 

Fix some $i\in\{1,2,...,k\}$. If the sequence $a_i(n)$ satisfies the second condition, namely that $a_i(n)\Z$ converges to $c\Z$ ($c\neq 0$), then, for $n$ sufficiently large, we have $$\floor{a_i(n)}=a_i(n)-\{c\}+o_n(1).$$ This implies that $b_i^{\floor{a_i(n)}}=b_i^{-\{c\}}b_i^{a_i(n)+o_n(1)}$. Since the function $f_i$ is continuous, we can disregard the contribution of the $o_n(1)$ term, while the $b_i^{-\{c\}}$ term can be absorbed by the $x_i$. Therefore, we notice that in this case, we can remove the integer part for the sequence $a_i(n)$.
An entirely similar argument demonstrates that the same holds if $a_i(n)$ satisfies the third condition.

In order to complete the proof, we will consider below the case that each of the sequences $a_i(n)\Z$ is equidistributed on $\T$ for convenience (namely, they all satisfy the first condition). Since we can easily remove the integer parts for those sequences that satisfy the second or third condition as we did above,
the argument below easily adapts to the general setting with some changes in notation.

We rewrite the averages in \eqref{floorremove} as \begin{equation*}
     \underset{1\leq n\leq N}{\E} \prod_{i=1}^k {f_i}(b_i^{-\{a_i(n)\}}b_i^{{a_i(n)}}x_i)=
      \underset{1\leq n\leq N}{\E} \prod_{i=1}^k \wt{f_i}(a_i(n)\Z,b_i^{a_i(n)}x_i)
\end{equation*}where $\wt{f_i}:\T\times X_i\to\C$ is  the function defined by the relation \begin{equation*}
    \wt{f_i}(s\Z,gx)=f_i(b_i^{-\{s\}}gx).
\end{equation*} 

Let $v_i(n)$ be the sequence $(a_i(n)\Z,b_i^{a_i(n)}x_i)$. By our hypothesis, for any continuous functions $f_i'$ on $\wt{X_i}=\T\times X_i$, the averages of  $\prod_{i=1}^{k}f_i'(v_i(n))$ converge. However, note that the functions $\wt{f_i}$ that we are dealing with may have discontinuities when $s$ becomes close to an integer. Our goal is to approximate each $\wt{f_i}$ by a continuous function and then use the above observation.

Let $\e>0$. For every $1\leq i\leq k$, we define a continuous function $f_{i,\e}$ that agrees everywhere with $\wt{f_i}$ on $[\e,1-\e]\times X_i$ and such that $f_{i,\e}$ is bounded uniformly by $2\norm{\wt{f_i}}_{\infty}$. Observe that \begin{multline}\label{2ke}
    \big| \underset{1\leq n\leq N}{\E} \wt{f_i}(v_i(n))- \underset{1\leq n\leq N}{\E} {f_{i,\e}}(v_i(n)) \big| =\frac{1}{N}\big|\sum_{\underset{ a_i(n)\notin [\e,1-\e]}{1\leq n\leq N}   }^{}\big(\wt{f_i}(v_i(n))-{f_{i,\e}}(v_i(n))\big) \big|\ll \\
    \e\norm{\wt{f_i}}_{\infty}+o_N(1)
\end{multline}where the last bound follows from the triangle inequality and the fact that $a_i(n)$ is equidistributed $(mod\ 1)$, which indicates that the set $\{n\in \N\colon a_i(n)\notin [\e,1-\e]\}$ has asymptotic density $2\e$.

Combining \eqref{2ke} with a simple telescoping argument, we deduce that \begin{equation*}
   \limsup_{N\to+\infty} \big| \underset{1\leq n\leq N}{\E} \prod_{i=1}^k \wt{f_i}(v_i(n)) - \underset{1\leq n\leq N}{\E} \prod_{i=1}^{k}{f_{i,\e}}(v_i(n))      \big|\ll k\e\prod_{i=1}^k \norm{\wt{f_i}}_{\infty}.
\end{equation*}
Since the averages $\underset{1\leq n\leq N}{\E} \prod_{i=1}^k{f_{i,\e}}(v(n))  $ converge as $N\to \infty$ by our hypothesis (the functions involved here are continuous), we infer that the averages $$\underset{1\leq n\leq N}{\E} \prod_{i=1}^k \wt{f_i}(v_i(n))$$ form a Cauchy sequence and, therefore, converge. The conclusion follows.
\end{proof}

Using the previous lemma, we can establish that the first part of Theorem \ref{pointwise} follows from the second part. We postpone this until the next section, where we also prove the second part of Theorem \ref{pointwise}.

\section{Proofs of main theorems}\label{PROOFS}

The main tool we are going to utilize in our proof is the quantitative Green-Tao theorem on polynomial orbits (Theorem \ref{quantitative}). A technical obstruction in our proof is that among the functions $a_1,...,a_k$ in the statement of Theorem \ref{pointwise}, we must separate the polynomial functions from the strongly non-polynomial ones. We will accomplish this using an elementary lemma (Lemma \ref{growthdecomposition}) which is proven in the Appendix. We restate Theorem \ref{pointwise} here:

\begin{customthm}{1.2}
Let $\mathcal{H}$ be a Hardy field that contains the polynomial functions. Let $a_1,...,a_k$ be functions in $\mathcal{H}$ that have polynomial growth. Assume that there exists $\e>0$, such that every function $a\in \mathcal{L}(a_1,...,a_k)$ satisfies either
\begin{equation*}
    \lim\limits_{t\to+\infty} \frac{|a(t) -p(t)   |}{t^{\e}}=+\infty    \ \text{ for any polynomial }\ p(t)\in \Q[t],
\end{equation*} or \begin{equation*}
  \text{the limit }\  \lim\limits_{t\to+\infty} a(t)  \ \text{is a real number}.
    \end{equation*}Then, we have the following:\\
    (i) For any collection of nilmanifolds $X_i=G_i/\Gamma_i$, elements $b_i\in G_i$, $x_i\in X_i$ and continuous functions $f_1,...,f_k$ with complex values, the averages \begin{equation*}
        \frac{1}{N}\sum_{i=1}^{N} f_1(b_1^{\floor{a_1(n)}}x_1)\cdot \dots \cdot f_k(b_k^{\floor{a_k(n)}}x_k)
    \end{equation*}converge.\\
(ii) For any collection of nilmanifolds $X_i=G_i/\Gamma_i$ such that the groups $G_i$ are connected, simply connected, elements $b_i,\in G_i$, $x_i\in X_i$ and continuous functions $f_1,...,f_k$ with complex values, the averages \begin{equation*}
        \frac{1}{N}\sum_{i=1}^{N} f_1(b_1^{a_1(n)}x_1)\cdot... \cdot f_k(b_k^{a_k(n)}x_k)
    \end{equation*}converge.
\end{customthm}

First of all, we show that the first part follows from the second part. This is accomplished by using Lemma \ref{removerounding}. We remark again that in part i), there are no connectedness assumptions made on the groups $G_i$. Nonetheless, the convention \eqref{convention} in Section \ref{introduction} allows us to consider only the case that the Lie groups $G_i$ are connected and simply connected. We implicitly work under this assumption in the proof below.

\begin{proof}[Proof of part i) of Theorem \ref{pointwise}, assuming part ii)]

We will have to confirm that the conditions of Lemma \ref{removerounding} are satisfied. Let $a_1,...,a_k\in \mathcal{H}$ be as in the statement of Theorem \ref{pointwise}. Condition a) of Lemma \ref{removerounding} is satisfied by our hypothesis. Now, we verify the second condition. 

Fix some $i\in \{1,2,..,k\}$. We consider three cases:\\
i) Assume that the function $a_i(t)$ is such that $|a_i(t)-q(t)|\succ t^{\e}$ for all polynomials $q(t)$ with rational coefficients. Then, the sequence $a_i(n)\Z$ is equidistributed on $\T$ (satisfying condition (1)), due to Theorem \ref{Bosh}.\\
ii) Assume that the function $a_i(t)$ is such that $\lim\limits_{t\to+\infty} a_i(t)=c\notin \Z$. Then, the sequence $a_i(n)$ satisfies condition (2) of Lemma \ref{removerounding}.\\
iii) Assume that neither of the above conditions is true. Since $a_i(t)$ must satisfy \eqref{P2}, we deduce that $a_i(t)$ converges to some integer $c$. However, since $a_i(t)$ converges to $c$ monotonically (functions in $\mathcal{H}$ are eventually monotone), we deduce that condition (3) of Lemma \ref{removerounding} is satisfied and we are done.
\end{proof}

Now we switch our attention to the proof of part ii). Firstly, we will apply Lemma \ref{growthdecomposition} from Appendix \ref{hardyappendix} in order
to replace the original functions $a_1,...,a_k$ with a collection of functions that are more manageable. This will enable us to separate the polynomial functions from strongly non-polynomial ones. In addition, among the strongly non-polynomial functions, we have to isolate those that are sub-fractional, because they behave differently when we try to employ the Taylor expansion. This whole process will reduce Proposition \ref{impprop} below to Lemma \ref{Lemma before step 2}, which we will then proceed to establish.

Following all these reductions, we use the Taylor expansion to substitute the strongly non-polynomial functions with polynomials in some small intervals. Now, this reduces the original problem to a quantitative equidistribution problem of finite polynomial sequences in a nilmanifold, although the coefficients of the polynomials vary depending on the underlying short interval. Finally, in Step 3, we use the quantitative equidistribution results to show that averages of Lipschitz functions in the nilmanifold over these "variable" polynomial sequences are very close to an integral over a subnilmanifold, which ultimately allows us to evaluate the limit of the initial averages.

We make one final reduction:
let $a_1,...,a_k\in\mathcal{H}$ be functions as in the statement of Theorem \ref{pointwise}. Passing to the product nilmanifold, we infer that our problem follows from the following statement:
\begin{proposition}\label{impprop}
Let $X=G/\G$ be a nilmanifold, $b_1,...,b_k\in G$ are commuting elements and $a_1,...,a_k\in \mathcal{H}$ have polynomial growth. Assume that there exists $\e>0$, such that every function $a\in \mathcal{L}(a_1,...,a_k)$ satisfies either \eqref{P1} or \eqref{P2}. Then, for any $x\in X$ and continuous function $F:X\to \C$, we have that the averages \begin{equation}\label{sht}
    \underset{1\leq n\leq N}{\E} F(b_1^{a_1(n)}\dots b_k^{a_k(n)} x)
\end{equation}converge. 

\end{proposition}
\begin{proof}[Proof that Proposition \ref{impprop} implies Theorem \ref{pointwise}]
We want to show that the averages \begin{equation*}
        \frac{1}{N}\sum_{i=1}^{N} f_1(b_1^{a_1(n)}x_1)\cdot ... \cdot f_k(b_k^{a_k(n)}x_k)
    \end{equation*}converge for all $x_i\in X_i$, where the nilmanifolds $X_i=G_i/\G_i$, the elements $b_i$ and the functions $a_i\in \mathcal{H}$ are as in the statement of part (ii) of Theorem \ref{pointwise}. We define the continuous function $F$ on the product nilmanifold $X_1\times \dots \times X_k$ by the relation $$F(y_1,...,y_k)=f_1(y_1)\cdot...\cdot f_k(y_k).$$ We also denote by $\wt{b_i}$ the element on $G_1\times \dots \times G_k$, whose $i$-th coordinate is equal to $b_i$, while all of its other coordinates are equal to the respective identity element. Observe that the elements $\wt{b_1},...,\wt{b_k}$ are pairwise commuting. Finally, let us also denote by $x$ the point $(x_1,...,x_k)$ on the product $X_1\times\dots\times X_k$. Then, a simple computation implies that our initial average is equal to
        \begin{equation*}
        \underset{1\leq n\leq N}{\E} F(b_1^{a_1(n)}\dots b_k^{a_k(n)} x)
    \end{equation*}and the claim now follows.
\end{proof}

Now, we will reduce Proposition \ref{impprop} to the following lemma:

\begin{lemma}\label{Lemma before step 2}
    Let $G/\G$ be a nilmanifold and suppose that $u_1,...,u_{s}$ are elements in $G$, such that  \begin{equation}\label{R to Z action}
        \overline{u_1^{\R}...u_{s}^{\R}\G}=\overline{u_1^{\Z}...u_{s}^{\Z}\G}.
    \end{equation} In addition, assume that the nilmanifold $X'=\overline{u_1^{\R}...u_{s}^{\R}\G}$ can be represented as $G'/\G'$, where $G'$ is connected, simply connected and contains all elements $u_1,...,u_{s}$. Let $s_0,s$ be positive integers and define the sequence $v(n)$\begin{equation}\label{v2}
    \prod_{i=1}^{s_0} u_i^{p_i(n)+x_i(n)} \prod_{i=s_0+1}^{s}u_i^{\wt{p}_i(n)+x_i(n)},
\end{equation}where:\\
a) $p_i,\wt{p}_j$ are polynomials with real coefficients, such that every non-trivial linear combination of the polynomials $\wt{p}_{s_0+1},...,\wt{p}_s$ is not an integer polynomial,\\
b) the functions $x_{i}$ are all strongly non-polynomial, the functions $x_1,...,x_{s_0}$ are not sub-fractional and have pairwise distinct growth rates and the functions $x_{s_0+1},...,x_s$ are sub-fractional.\\
Then, for any Lipschitz function $F$ on $X'$ with Lipschitz norm at most 1, the averages \begin{equation*}
    \underset{1\leq n\leq N}{\E} F(\prod_{i=1}^{s_0} u_i^{p_i(n)+x_i(n)} \prod_{i=s_0+1}^{s}u_i^{\wt{p}_i(n)+x_i(n)}\G')
\end{equation*}converge to the integral $\int _{X'}F \ dm_{X'}$.
\end{lemma}

While the statement may seem relatively convoluted at first, the sequence $v(n)$ above has a convenient form, so that the Taylor approximation can be used directly. 

First of all, we prove that Lemma \ref{Lemma before step 2} implies Proposition \ref{impprop}. We will rely on Lemma \ref{growthdecomposition}
to make the required reductions on the Hardy field functions in the iterates and we will also use Lemma \ref{ergodicaction} to get the equality \eqref{R to Z action}, where $u_1,\dots,u_s$ will be some appropriate elements of the Lie group $G$ (they will be products of powers of the elements $b_i$ in Proposition \ref{impprop}).

\begin{proof}[Proof that Lemma \ref{Lemma before step 2} implies Proposition \ref{impprop}]

Applying Lemma \ref{growthdecomposition}, we can find a basis ${f_1,...,f_s}$ for the set $\mathcal{L}(a_1,...,a_k)$ of non-trivial linear combinations. The collection of functions $f_1,...,f_s$ can be written in the form $(g_1,,,.,g_m,h_1,...,h_{\ell})$ where $g_i,h_{i}$ are as in Lemma \ref{growthdecomposition}. We will not use this specific property until a little further below, so as to avoid cumbersome notation. Note that the fact that $f_1,...,f_s$ form a basis indicates that the assumptions on the linear combinations of the $a_1,...,a_k$ in the statement of Proposition \ref{impprop} are now transferred to the functions $f_1,...,f_s$.

If we write \begin{equation}\label{linearexpansion}
    a_i(t)=\sum_{j=1}^{s} c_{i,j}f_j(t),
\end{equation}for some real numbers $c_{i,j}$, then we can rewrite the average in \eqref{sht} as \begin{equation}\label{Fu1}
    \underset{1\leq n\leq N}{\E} F(u_1^{f_1(n)}\dots u_s^{f_s(n)}x)
\end{equation}for some commuting elements $u_1,...,u_s\in G$ (here, the fact that the elements $b_1,...,b_k$ commute is required).
We denote $$v(n)=u_1^{f_1(n)}\dots u_s^{f_s(n)},$$which is a sequence in $G$. We want to establish that the averages of the sequence $F(v(n)x)$ converge for all $x\in X$ and any continuous function $F$.
If one of the functions $f_1,...,f_m$ is such that the limit $\lim\limits_{t\to+\infty} f_i(t)$ is a real number (which can be the case when a linear combination of the original functions satisfies \eqref{P2}), we can invoke the continuity of $F$ to eliminate the corresponding term $u_i^{f_i(n)}$ in the product and replace it by a constant. Hence, we may assume that all of the functions $f_1(t),...,f_s(t)$ go to $\pm \infty$, as $t\to+\infty$.

Now we use the particular structure of the functions $f_1,...,f_s$.
The statement of Lemma \ref{growthdecomposition} implies that the collection of functions $f_1,...,f_s$ has the form $(g_1,...,g_m,h_1,...,h_{\ell})$ (clearly, $m+\ell=s$) such that the functions $g_i$ can be written in the form $p_i(t)+x_i(t)$, where the functions $x_1(t),...,x_{m}(t)$ are strongly non-polynomial and have pairwise distinct (and non-trivial) growth rates, while the functions $h_i$ can be written in the form $\wt{p}_i(t)+y_i(t)$, where $y_i(t)$ converges to 0. Here, $p_i$ and $\wt{p}_i$ are polynomials with real coefficients.

We may rearrange the functions $f_i$ so that $f_i=g_i$ for all $1\leq i\leq m$ and $f_j=h_{j-m}$ for each $m+1\leq j\leq s$. Rewrite the sequence $v(n)$ as \begin{equation*}
    v(n)=\prod_{i=1}^{m}u_i^{g_i(n)}\cdot \prod_{i=1}^{\ell} u_{m+i}^{h_i(n)}=\prod_{i=1}^{m}u_i^{p_i(n)+x_i(n)}\cdot \prod_{i=1}^{\ell} w_i^{\wt{p}_i(n)+y_i(n)},
\end{equation*}where we use the notation $w_i$ for the element $u_{i+m}$ in the last equality. Without loss of generality, assume that $$x_1(t)\succ  x_2(t)\succ \cdots \succ x_{m}(t)\succ 1.$$

Firstly, we need to distinguish between the sub-fractional functions and the "fast" growing functions among the functions $x_i(t)$ (this will be important later when we use the polynomial expansion). Thus, let $0\leq s_0\leq m$ be a natural number such that $x_{s_0}(t)\gg t^{\e}$ for some $\e>0$, while $x_{s_0+1}$ is a sub-fractional function. This also implies that all the functions $x_{i}$ for $i$ satisfying $\ s_0+1\leq i\leq m$ are sub-fractional since we have arranged the functions so that their growth rates are in descending order. 

Once again, we rewrite the sequence $v(n)$ in the form \begin{equation*}
    v(n)= \prod_{i=1}^{s_0} u_i^{p_i(n)+x_i(n)} \prod_{i=s_0+1}^{m} u_i^{p_i(n)+x_i(n)}  \  \prod_{i=1}^{\ell}\  w_i^{\wt{p}_i(n)+y_i(n)} .
\end{equation*}Because the function $F$ is continuous, we can discard the functions $y_1,...,y_{\ell}$, since they all converge to zero. The hypotheses \eqref{P1} and \eqref{P2} on the linear combinations of the remaining functions in the exponents continue to hold. Indeed, this can be seen by noting that \eqref{P1} and \eqref{P2} still hold when replacing one of the functions (say $a_1$) by a function of the form $a_1(t)+e(t)$, with $e(t)\to 0$. Consequently, we can redefine $v(n)$ to be the sequence \begin{equation*}
     v(n)= \prod_{i=1}^{s_0} u_i^{p_i(n)+x_i(n)} \prod_{i=s_0+1}^{m} u_i^{p_i(n)+x_i(n)}  \  \prod_{i=1}^{\ell}\  w_i^{\wt{p}_i(n)} .
\end{equation*}

We will now reduce our problem to the case that the polynomials $\wt{p}_1(t),...,\wt{p}_{\ell}(t)$ are linearly independent.
Due to our hypothesis (namely \eqref{P1},\eqref{P2}), every non-trivial linear combination of the functions $\wt{p}_1(t),...,\wt{p}_{\ell}(t)$ must satisfy either \eqref{P1} or \eqref{P2}. Thus, every linear combination of the polynomials $\wt{p}_1(t),...,\wt{p}_{\ell}(t)$ is not a polynomial with integer coefficients unless it is the zero polynomial. If the second case is true, there exist $c_1,...,c_{\ell-1}\in \R$ such that \begin{equation*}
    \wt{p}_{\ell}=c_1\wt{p}_1+\dots+ c_{\ell -1}\wt{p}_{\ell-1}.
\end{equation*}Then, we have 
$$\prod_{i=1}^{\ell} w_{i}^{\wt{p}_i(n)}=\prod_{i=1}^{\ell-1}(w_{i}w_{\ell}^{c_i})^{\wt{p}_i(n)}. $$
If the polynomials $\wt{p}_1,...,\wt{p}_{\ell-1}$ are linearly independent, then we are done. Otherwise, we proceed similarly to eliminate $\wt{p}_{\ell-1}$. After a finite number of steps, we will reach a collection of linearly independent polynomials.

In view of the above, we are allowed to assume that $\wt{p}_1,...,\wt{p}_{\ell}$ are linearly independent. 
Now, we show that we can reduce to the case that the polynomials $p_{s_0+1},\dots, p_{m},\wt{p}_{1},\dots, \wt{p}_{\ell}$.
Indeed, the linear independence assumption on the polynomials $\wt{p}_1,\dots, \wt{p}_{\ell}$ implies that the polynomials $p_{s_0+1},...,p_m,\wt{p}_1,...,\wt{p}_{\ell}$ are linearly independent. To see how this works, observe that if there are real numbers $c_i,d_j$ such that $$\sum_{i=1}^{m-s_0}c_ip_{s_0+i}+\sum_{i=1}^{\ell}d_ip'_i=0,$$ then the function \begin{equation*}
    \sum_{i=1}^{m-s_0}c_i(p_{s_0+i}+x_{s_0+i})+\sum_{i=1}^{\ell}d_i\wt{p}_i= \sum_{i=1}^{m-s_0}c_ix_{s_0+i}
\end{equation*}is a sub-fractional function that does not converge to 0, since the functions $x_{s_0+i}$ are sub-fractional and have pairwise distinct growth rates. This contradicts our hypothesis (specifically \eqref{P1}) and our claim follows.

In conclusion, we see that the sequence $v(n)$ can be written in the form \begin{equation}
    \prod_{i=1}^{s_0} u_i^{p_i(n)+x_i(n)} \prod_{i=s_0+1}^{m}u_i^{p_i(n)+x_i(n)} \prod_{i=1}^{\ell}w_i^{\wt{p}_i(n)},
\end{equation}
where the functions $x_{i}$ are strongly non-polynomial with distinct growth rates, the functions $x_1,...,x_{s_0}$ are not sub-fractional, the functions $x_{s_0+1},...,x_s$ are sub-fractional and every non-trivial linear combination of the polynomials $p_{s_0+1},...,p_m,\wt{p}_1,...,\wt{p}_{\ell}$ is not an integer polynomial. We also recall that we have arranged the functions $x_i$ to be in decreasing order with respect to their growth rates.

We can combine the last two factors of this product into one factor to simplify our problem a bit more. More specifically, we can rewrite the sequence $v(n)$ in the form (we make some mild modifications in our notation here) 
\begin{equation}\label{v(n)}
    v(n)=\prod_{i=1}^{s_0} u_i^{p_i(n)+x_i(n)} \prod_{i=s_0+1}^{s}u_i^{\wt{p}_i(n)+x_i(n)},
\end{equation}where $s=m+l$, $p_i,\wt{p}_j$ are real polynomials, the functions $x_{i}$ are strongly non-polynomial with distinct growth rates, $x_1,...,x_{s_0}$ are not sub-fractional, $x_{s_0+1},...,x_s$ are sub-fractional and every non-trivial linear combination of the polynomials $\wt{p}_{i}$ is not an integer polynomial. Namely, our functions satisfy hypotheses a) and b) of Lemma \ref{Lemma before step 2}.

In order to establish our assertion, it suffices to show that the sequence $v(n)x$ (where $v(n)$ is as in \eqref{v(n)}) is equidistributed on the nilmanifold $X'=\overline{u_1^{\R}\dots u_s^{\R}x}$ for any $x\in X$. We will prove this in the case $x=\G$ since the general case follows from this using the change of base point trick that we discuss in Appendix \ref{nilpotent crap} (see Sub-subsection \ref{change of base point}). In addition, we can invoke Lemma \ref{ergodicaction} to find a real number $s_0$, such that $X'=\overline{(u_1^{s_0})^{\Z}...(u_s^{s_0})^{\Z}\G}$. Replacing the functions $p_i(t)+x_i(t)$ ($1\leq i\leq s_0$) by the functions $\big(p_i(t)+x_i(t)\big)/s_0$ and $\wt{p}_i(t)+x_i(t)$ ($s_0+1\leq i\leq s$) by $\big(\wt{p}_i(t)+x_i(t)\big)/s_0$ (the assumptions on the linear combinations of the functions remain unaffected), we can reduce our problem to the case that $X'=\overline{u_1^{\Z}...u_s^{\Z}\G}$.

We want to show that for any continuous function $F$ from $X'=G'/\G'$ ($G'$ is connected, simply connected and $\G'$ is a uniform subgroup), the averages \begin{equation*}
    \underset{1\leq n\leq N}{\E} F(v(n)\G')
\end{equation*}converge to the integral $\int _{X'}F \ dm_{X'}$.
Since Lipschitz functions are dense in the space $C(X')$, we may assume that $F$ is Lipschitz continuous. In addition, we may assume after rescaling that $\norm{F}_{\text{Lip}(X')}\leq 1$. Now, our claim follows immediately from Lemma \ref{Lemma before step 2}.
\end{proof}

In the following part, we will prove Lemma \ref{Lemma before step 2}. We divide the proof into two steps. During Step 1, we will approximate the functions $x_1,\dots,x_s$ by polynomials in a suitable short interval. Our goal is to reach an average over a short interval of the form $[N, N+L(N)]$ of a sequence of the form $F(g(n)x)$, where $F$ is Lipschitz and $g(n)$ is a polynomial sequence on the nilmanifold $X'$ (the polynomial sequence will vary with the parameter $N$).
This will be ensured by Proposition \ref{common Taylor expansion}. In step 2, we will use Theorem \ref{quantitative} to deduce that these averages are close to the integral of $F$ for large values of $N$.

All the reductions above allow us to write $v(n)$ in a form that will be appropriate for the application of the quantitative equidistribution theorem (after we perform the Taylor expansion).
When we apply the Taylor expansion in the first step, the functions $x_{s_0+1},...,x_{s}$ will become approximately constant and thus the desired equidistribution will be mainly "affected" by the polynomials $\wt{p}_{s_0+1},...,\wt{p}_s$. On the other hand, the functions $x_1,...,x_{s_0}$ will play a meaningful role in the equidistribution of our sequence. In particular, the presence of the functions $x_1,\dots,x_{s_0}$ will imply "closeness" of our averages to the integral of the Lipschitz function $F$, unless the projections of the elements $u_1,\dots, u_{s_0}$ on the horizontal torus are zero. In this second case, condition $a)$ on the polynomials completes the proof.
Lastly, the "linear independence" condition of the polynomials $\wt{p}_{s_0+1},...,\wt{p}_s$ guarantees that the projection of the sequence $v(n)$ on $X'$ will be equidistributed on the entire nilmanifold $\overline{u_1^{\R}... u_{s}^{\R}\G}$, since, otherwise, we would need to pass to some subnilmanifold to guarantee equidistribution (and to an appropriate arithmetic progression).

\begin{proof}[Proof of Lemma \ref{Lemma before step 2}]

{\bf Step 1: Approximating by polynomials:}
Let $L(t)$ be a sub-linear function with $\lim\limits_{t\to+\infty}L(t)=+\infty$ that we will determine later. It suffices to show that the sequence of the averages \begin{equation}\label{Fu2}
    \underset{N\leq n\leq N+L(N)}{\E} F(v(n)\G') 
\end{equation}converges to $\int _{X'}F \ d m_{X'}$, since the conclusion would follow from Lemma \ref{short intervals}. Reordering if necessary, we assume again that \begin{equation*}
    x_1(t)\succ\dots \succ x_{s_0}(t).
\end{equation*}

Let $r$ be a very large natural number compared to the degrees of the polynomials $p_i,\wt{p}_j$ and the degrees of the functions $x_i(t)$. If $r$ is sufficiently large, we have that $x_i^{(r)}(t)=o_t(1)$ for all $i\in \{1,\dots,s_0\}$. Assuming again that $r$ is sufficiently large, then for any function $L(t)$ that satisfies \begin{equation*}
 (x_i^{(r)}(t))^{-1/r}   \prec L(t)\prec t^{1-\e'}
\end{equation*}for some $\e'>0$ and all $i\in{1,\dots ,s_0}$, we have that for each $ i\in\{1,...,s_0\}$, there is a unique natural number $k_i\geq r$ so that the sub-class $S(x_i,k_i)$ contains the function $L(t)$ (this follows from Proposition \ref{basic}). The fact that the function $L(t)$ belongs to $S(x_i,k_i)$ indicates that we have the relations \begin{equation}\label{growthinequalitiessht}
    (x_i^{(k_i)}(t))^{-1/k_i} \prec L(t)\prec (x_i^{(k_i+1)}(t))^{-1/(k_i+1)}.
\end{equation}

We can guarantee that the numbers $k_i$ are also very large compared to the degrees of the polynomials $p_{j},\wt{p}_{j'}$ by enlarging the number $r$ in the beginning\footnote{For example, assuming that $k_i$ is at least 10 times as large as the maximal degree appearing among the polynomials $p_i,\wt{p}_j$ and 10 times as large as the number $s$ of all existing polynomials would suffice for our arguments.}.

We use the Taylor expansion for the functions $x_1(t),...,x_{s_0}(t)$ to write \begin{equation}\label{Taylorexp}
    x_i(N+h)=x_i(N)+\dots +\frac{x_{i}^{k_i}(N)h^k}{k_i !}+o_N(1)= q_{i,N}(h)+o_N(1)
\end{equation}for $0\leq h\leq L(N)$ (for the explanation of the $o_N(1)$ term, see the discussion after Proposition \ref{growth}). If, on the other hand, we have $i>s_0$ (namely, in the case where the function $x_i$ is sub-fractional), then \begin{equation}
    \max_{0\leq h\leq L(N)} |x_i(N+h)-x_i(N)|=o_N(1). 
\end{equation}In addition, we denote $p_{i,N}(h)=p_i(N+h)$ and similarly $\wt{p}_{i,N}(h)=\wt{p}_i(N+h)$ for every admissible value of $i$. Thus, we rewrite the expression in \eqref{Fu2} as \begin{equation}\label{asdfghjklequation}
    \underset{0\leq h\leq L(N)}{\E} F(w_N \prod_{i=1}^{s_0} u_i^{q_{i,N}(h)+p_{i,N}(h)} \prod_{i=s_0+1}^{s} u_i^{\wt{p}_{i,N}(h)}                       \G')
\end{equation}where we discarded the $o_N(1)$ terms, because $F$ is continuous. Here, $w_N=\prod_{i=s_0+1}^{s}u_i^{x_i(N)}$ but the explicit form of this term will not concern us, since we will only require that the element $w_N$ belongs to the underlying group $G'$ defining the nilmanifold $X'=\overline{u_1^{\R}\dots u_s^{\R}\G}$ 

In conclusion, we have reduced our problem to showing that given the nilmanifold $X'=\overline{u_1^{\R}\dots u_s^{\R}\G}$ (which is also equal to $\overline{u_1^{\Z}\dots u_s^{\Z}\G}$), the averages in \eqref{asdfghjklequation} converge. Here, the polynomials $q_{i, N}$ are defined in \eqref{Taylorexp} (they are essentially the Taylor polynomials of the Hardy field functions $x_i$), while the polynomials $p_{i, N},\wt{p}_{j, N}$ were defined by the relations $p_{i, N}=p_i(N+h) \text{ and }\ \wt{p}_{j, N}=\wt{p}_j(N+h)$, where the $p_i,\wt{p}_j$ are polynomials with real coefficients. We also recall that the polynomials $\wt{p}_i$ are such that every non-trivial linear combination of them is not an integer polynomial. Under all these assumptions,
we will show that the polynomial sequence (restricted to the range $0\leq h\leq L(N)$) inside the function $F$ is $\delta$-equidistributed for $N$ sufficiently large in the following step. We remark that the growth conditions \eqref{growthinequalitiessht} imposed on the function $L(t)$ will also play a crucial role in this.

{\bf Step 2: Using the quantitative equidistribution theorem:}
Let $Z\cong \T^{d}$ be the horizontal torus of the nilmanifold $X'=\overline{u_1^{\R}\dots u_s^{\R}\G}$ and let $\pi :X'\to Z$ denote the projection map.
Let $\delta>0$ be sufficiently small (in the sense that Theorem \ref{quantitative} is applicable). We assert that the finite polynomial sequence \begin{equation}\label{finitesequence}
    \Big(        \prod_{i=1}^{s_0} u_i^{q_{i,N}(h)+p_{i,N}(h)} \prod_{i=s_0+1}^{s} u_i^{\wt{p}_{i,N}(h)} \G'     \Big)_{0\leq h\leq L(N)}
\end{equation}is $\delta$-equidistributed on the nilmanifold $X'$ for $N$ sufficiently large. If the claim does not hold for a natural number $N$, then by Theorem \ref{quantitative}, there exists a real number\footnote{ The constant $M$ depends only on $\delta$, the nilmanifold $X'$ as well as the degrees of the polynomials $q_i,p_i$, which are all fixed in our arguments. The central property we need is that it is independent of the variable $N$.} $M>0$ and a non-trivial horizontal character $\chi_N$ of modulus $\leq M$ such that \begin{equation}\label{Fu7}
    \bignorm{ \chi_N\circ \pi ( \prod_{i=1}^{s_0} u_i^{q_{i,N}(h)+p_{i,N}(h)} \prod_{i=s_0+1}^{s} u_i^{\wt{p}_{i,N}(h)} \G')}_{C^{\infty}[L(N)]}\leq M. 
\end{equation} Thus, if our prior assertion fails, then the above relation would hold for infinitely many $N\in \N$.

Our first goal is to eliminate the dependence of the characters $\chi_N$ on the variable $N$.
Note that the function $\chi_N\circ \pi$ is a character on $\T^d$ of modulus $\leq M$ and, thus, has the form 
$$(t_1,...,t_{d})\to e(k_{1,N}t_1+\dots+k_{d,N} t_d)$$ for $k_{i,N}\in \Z $ with $|k_{1,N}|+\dots +|k_{d,N}|\leq M$. We also write $\pi(u_i)= (u_{i,1},...,u_{i,d})$ for the projections of the elements $u_i$ on the horizontal torus. Then, a straightforward computation allows us to rewrite \eqref{Fu7} as \begin{multline}\label{Fu3-}
    \bignorm{e\big(\sum_{i=1}^{s_0}(q_{i,N}(h)+p_{i,N}(h))(k_{1,N}u_{i,1}+\dots+ k_{d,N}u_{i,d})+\\
\sum_{i=s_0+1}^{s}(\wt{p}_{i,N}(h))(k_{1,N}u_{i,1}+\dots+ k_{d,N}u_{i,d})\big)}_{C^{\infty}[L(N)]}\leq M.
\end{multline}

Since there are only finitely many choices for the numbers $k_{1, N},...,k_{d, N}$, we have that, if our claim fails, there are $k_1,...,k_d\in \Z$, so that the inequality \begin{multline}\label{Fu3}
    \bignorm{e\big(\sum_{i=1}^{s_0}(q_{i,N}(h)+p_{i,N}(h))(k_{1}u_{i,1}+\dots+ k_{d}u_{i,d})+\\
\sum_{i=s_0+1}^{s}(\wt{p}_{i,N}(h))(k_{1}u_{i,1}+\dots+ k_{d}u_{i,d})\big)}_{C^{\infty}[L(N)]}\leq M.
\end{multline}holds for infinitely many $N\in \N$. We will also denote the horizontal character corresponding to the $d$-tuplet $(k_1,...,k_d)$ by $\chi$. Thus, we have eliminated the dependence of the character $\chi$ on $N$.

Denote $\wt{u_i}=k_1u_{i,1}+\dots+ k_du_{i,d}$. We will show that the above hypotheses imply that all the numbers $\wt{u_i}$ equal $0$.
Thus, suppose that this is not valid and we will reach a contradiction. We consider two cases:\\

\underline{Case 1:} Firstly, suppose that all of the numbers $\wt{u_i}$ with $1\leq i\leq s_0$ are zero,  which implies that the first summand in \eqref{Fu3} vanishes. Naturally, \eqref{Fu3} is simplified to \begin{equation}\label{Fu5}
    \bignorm{e\big(\sum_{i=s_0+1}^{s}\wt{p}_{i,N}(h)\wt{u_i}\big)}_{C^{\infty}[L(N)]}\leq M.
\end{equation}We recall here that we had defined $\wt{p}_{i,N}(h)=\wt{p}_i(N+h)$.
Let $Q(t)= \sum_{i=s_0+1}^{s}\wt{u_i}\wt{p}_{i}(t)$.
This is a linear combination of the polynomials $\wt{p}_i(t)$. However, this linear combination is not a polynomial in $\Q[t]$ due to our assumptions on the polynomials $\wt{p}_i(n)$, unless, of course, all the coefficients $\wt{u_i}$ (for $s_0+1\leq i\leq s$) in this combination are zero, which we have supposed to not be the case. Thus, $Q(t)$ has at least one irrational coefficient (except the constant term) and is equidistributed on $\T$. The relation \eqref{Fu5} implies that $\norm{e(Q(N+h))}_{C^{\infty}[L(N)]}\leq M$ for infinitely many $N$. It is not difficult to see by calculating the coefficients in $Q(N+h)$ that this fails for $N$ large enough.

\underline{Case 2:} Suppose now that at least one of the numbers $\wt{u_i}$ with $1\leq i\leq s_0$ is non-zero
Furthermore,
assume $l$ is a positive integer that is larger than the degrees of the polynomials $p_{i, N}(h),\wt{p}_{j, N}(h)$ (for all admissible values of the indices $i, j$) as well as the degrees of the functions $x_i$, but $l$ is also smaller than all the numbers $k_i$. Recall that we have picked $k_i$ to be very large in relation to the degrees of the polynomials $p_{i},\wt{p}_{j}$ and degrees of the functions $x_i$ in the beginning, thus we can find "many" such numbers $l$. The fact that $l$ is larger than the degrees of the functions $x_i$ combined with Proposition \ref{prop:basic} implies that $x_i^{(l)}(t)\to 0$, as $t\to+\infty$. 

For a number $l$ as above the coefficient of the term $h^{l}$ in the polynomial appearing in \eqref{Fu3} is equal to \begin{equation*}
    \frac{1}{l!}\sum_{i=1}^{s_0}x_i^{(l)}(N) \wt{u_i}
\end{equation*}and, thus, it does not depend on the polynomials $p_i,\wt{p}_j$. Using the definition of the smoothness norms, \eqref{Fu3} implies that \begin{equation*}
    L(N)^{l}\bignorm{\frac{1}{l!}\sum_{i=1}^{s_0}x_i^{(l)}(N) \wt{u_i}}_{\T}\leq M
\end{equation*}for infinitely many $N\in \N$. The last inequality becomes \begin{equation*}
     L(N)^{l}\big|\sum_{i=1}^{s_0}x_i^{(l)}(N)\wt{u_i}\big|\leq l!M,
\end{equation*}for large enough $N$, because all functions $x_i^{l}(t)$ go to 0. However, the Hardy field function inside the absolute value above has the same growth rate as the function $x_1^{(l)}(t)$, since the functions $x_1,...,x_{s_0}$ are strongly non-polynomial and have distinct growth rates (recall that $x_1$ has the largest growth rate among the $x_i$), unless, of course, $\wt{u}_1=0$. If the latter does not hold, we get \begin{equation*}
     \big|x_1^{(l)}(N)\wt{u}_1  \big|\leq \frac{C}{L(N)^l}
\end{equation*} for infinitely many $N$ and some constant $C$, which contradicts \eqref{growthinequalitiessht}. Thus, we eventually deduce that $\wt{u}_1=k_1u_{1,1}+\dots+ k_du_{1,d}=0$. Repeating the same argument, we get inductively that $\wt{u}_i=k_1u_{i,1}+\dots + k_du_{i,d}$=0 for all $1\leq i\leq s_0$, which is a contradiction.

To summarize, we have shown that if the sequence in \eqref{finitesequence} is not $\delta$-equidistributed for all large enough $N$, then all the numbers $\wt{u}_i=k_1u_{i,1}+\dots+ k_du_{i,d}$ are zero. Equivalently, we have $\chi\circ \pi (u_i)=0$ for all $1\leq i\leq s$. This implies that the character $\chi$ is the trivial character on $X'$. Indeed, the character $\chi$ annihilates all elements $u_1^{n_1}\cdots  u_s^{n_s}\G$, where $n_1,...,n_s\in \Z$ and by density of those elements on $X'$ (recall our assumption that $X'$ is also equal to the nilmanifold $\overline{u_1^{\Z}...u_s^{\Z}\G}$), $\chi$ is zero everywhere. This is a contradiction (the horizontal characters appearing when we applied Theorem \ref{quantitative} are assumed to be non-trivial).

In conclusion, we have that the finite polynomial sequence in \eqref{finitesequence} is $\delta$-equidistributed for $N$ sufficiently large. Thus, we conclude that the averages in \eqref{Fu2} are $\delta\norm{F(w_N \cdot  )}_{\text{Lip}(X')}=\delta\norm{F}_{\text{Lip}(X')}$ close to the quantity $
    \int_{X'} F(w_N x)\ dm_{X'}(x)$. The action of $w_N$ on $X'$ preserves the Haar measure of $X'$, so we get that the last integral is equal to $\int_{X'} F(x) \ dm_{X'}(x)$. Taking $\delta\to 0$, we finish the proof.
    \end{proof}

\begin{proof} [Proof of Theorem \ref{equidistribution}]
As we explained in the previous section (before the statement of Lemma \ref{removerounding}), the first part follows from the second part (see also \cite[Lemma 5.1]{Fraeq}) and, in turn,
this second part follows using similar arguments as in the proof of Theorem \ref{pointwise}. We only highlight the main differences here. All the disparities appear in the part where we reduce Proposition \ref{impprop} to Lemma \ref{Lemma before step 2}.\\
a) In \eqref{Fu1}, all the functions $f_1,...,f_s$ satisfy \eqref{P1} (there are no functions among the $f_i$ that satisfy $\lim\limits_{t\to+\infty}|f_i(t)|<\infty$). We also have $k=s$.\\
b) We do not have to make the reduction to the case where the polynomials $\wt{p}_1,...,\wt{p}_{\ell}$ are linearly independent. There cannot be a non-trivial linear combination of them that is zero, because that would violate \eqref{P1}. \\
c) The limit of the averages is again $\int_{X'} F(x) \ dm_{X'}(x)$, where $X'=\overline{u_1^{\R}... u_s^{\R}\G}$ by Lemma \ref{Lemma before step 2}. We would like to show that the limit is equal to $\int_{X''} Fdm_X''$, where $X''$ is the nilmanifold $\overline{b_1^{\R}... b_k^{\R}\G}$. Recall that each $u_i$ is equal to $b_1^{c_{i,1}}... b_k^{c_{i,k}}$ (by \eqref{linearexpansion}) and the numbers $c_{i,j}$ form an invertible $k\times k$ matrix (due to the linear independence assumption on the original functions $a_1,...,a_k$). Thus, we can also write $b_i=\prod_{j=1}^{k}u_i^{c'_{i,j}}$ for some numbers $c'_{i,j}$ (here, we also use that the elements $b_i$ are pairwise commuting). Combining the above, we have that $b_1^{\R}... b_k^{\R}=u_1^{\R}... u_k^{\R}$ and thus the closures of their projections on $G/\G$ define the same subnilmanifold.
\end{proof}

\subsection{Proof of Theorem \ref{simple}}
Finally, we provide a proof of Theorem \ref{simple}. We use the following proposition from \cite{tsinas}. Although it will not be used in the proof, we have to assume below that the Hardy field $\mathcal{H}$ that we work with is closed under composition and compositional inversion of functions, since the following proposition was proven under this assumption.

\begin{proposition}\cite[Proposition 3.1]{tsinas}\label{factors}
Let $\mathcal{H}$ be a Hardy field that contains the field $\mathcal{LE}$ of logarithmico-exponential functions and is closed under composition and compositional inversion of functions (when defined).
Assume that the functions $a_1,...,a_k\in\mathcal{H}$ have polynomial growth and suppose that the following two conditions hold:

i) The functions $a_1,...,a_k$ dominate the logarithmic function $\log t$.

ii) The pairwise differences $a_i-a_j$ dominate the logarithmic function $\log t$ for any $i\neq j$.

Then, there exists a positive integer $s$, such that for any measure preserving system $(X,\m,T)$, functions $f_1\in L^{\infty}(\m)$ and $f_{2,N},...,f_{k,N}\in L^{\infty}(\m)$, all bounded by $1$, with $f_1\perp Z_{s}(X)$, the expression \begin{equation}\label{factor}
   \sup_{|c_{n}|\leq 1} \norm{\underset{1\leq n\leq N}{\E}\ c_{n}\  T^{\floor{a_1(n)}} f_1 \cdot T^{\floor{a_2(n)}}f_{2,N}\dots  T^{\floor{a_k(n)}}f_{k,N}}_{L^2(\m)}
\end{equation}converges to 0, as $N\to+\infty$.
\end{proposition}

\begin{proof}[Proof of Theorem \ref{simple}]

Using a standard ergodic decomposition argument, we may assume that the system $(X,\m, T)$ is ergodic. We can also rescale the functions $f_i\in L^{\infty}(\m)$ so that they are 1-bounded. 
Our first objective is to apply Proposition \ref{factors}, in order to reduce the problem to the case where the system $X$ is a nilsystem. If the functions $a_1,...,a_k$ are such that the conditions of Proposition \ref{factors} are satisfied, then this can be done instantly. If this does not hold, we have to perform a series of reductions to be able to apply Proposition \ref{factors}. We do this in 2 steps:

a) Firstly, assume there exists one function among the $a_i$ (say $a_1$ for simplicity), which has growth rate smaller than or equal to $\log t$. Then, using \eqref{P1} and \eqref{P2}, we deduce that $a_1$ converges monotonically to some real number $c$ and the integer part of $a_1(n)$ becomes a constant. Thus, the asymptotic behavior of the averages in \eqref{ergodicaverages} is the same, if we substitute the term $T^{\floor{a_1(n)}f_1}$ with the term $T^{\floor{c}}f_1$. Consequently, we only need to show that the averages \begin{equation*}
    \underset{1\leq n\leq N}{\E} T^{\floor{a_2(n)}}f_2 \dots T^{\floor{a_k(n)}}f_k
\end{equation*}converge in norm. Repeating the same argument, we eliminate all functions $a_i$ that grow slower than $\log t$.

b) Due to the reduction in the previous step, we have a sub-collection of the original functions, so that all functions in this new set dominate $\log t$. We will denote this collection by $a_1,...,a_k$ again, and our task is to show that the averages \begin{equation*}
    \underset{1\leq n\leq N}{\E} T^{\floor{a_1(n)}}f_1 \dots T^{\floor{a_k(n)}}f_k
\end{equation*} converge in mean (for all systems). Our next objective is to eliminate pairs of functions, whose difference grows slower than $\log t$ so that we can ultimately apply Proposition \ref{factors}.

Assume that two of the functions (say $a_1,a_2$) are such that their difference is dominated by $\log t$. We observe that the function $a_1(t)$ goes to $\pm \infty$ as $t\to+\infty$, since it dominates $\log t$. In that case, the function $a_1(t)$ satisfies \eqref{P1} and by Theorem \ref{Bosh}, the sequence $a_1(n)$ is equidistributed $(mod\  1)$. Observe that since $a_1-a_2$ must satisfy \eqref{P2}, we must have $a_2(t)=a_1(t)+c+x(t)$, where the function $x(t)\in \mathcal{H}$ converges to $0$ monotonically and $c$ is a real number. Thus, for $t\in \R$ sufficiently large we have \begin{equation}\label{frbl}
    \floor{a_2(t)}=\floor{a_1(t)+x(t)+c}=\floor{a_1(t)} +\floor{c}+\e(t)
\end{equation}where $\e(t)\in \{0,\pm 1,\pm 2\}$ and the value of $\e(t)$ depends on whether the inequalities 
$$\{a_1(t)+c\}+\{x(t)\}\leq 1$$ and $$\{a_1(t)\}+\{c\}\leq 1$$ hold or not, as well as whether $x(t)$ is eventually positive or negative.

Define $A_z=\{t\in \R, \e(t)=z\}$ for $z\in\{0,\pm 1,\pm 2\}$. Then, we see that our multiple averages are equal to the sum \begin{equation*}
    \sum_{z\in \{0,\pm 1,\pm 2\}} \underset{1\leq n\leq N}{\E} {\bf 1}_{A_z}(n)\  T^{\floor{a_1(n)}}(f_1\cdot T^{\floor{c}+z}f_2)\cdot T^{\floor{a_3(n)}}f_3\dots T^{\floor{a_k(n)}} f_k.
\end{equation*}For a fixed $z$, we want to show that the corresponding average converges. For $n\in \N$ large enough, we will approximate the sequence ${\bf 1}_{A_z}(n)$ by sequences of the form $F(\{a_1(n)\})$, where $F$ is a continuous function. 

We establish this for $z=0$ (the other cases follow similarly).  Assume that $x(t)$ decreases to 0 (the other case) is similar, which means that $x(t)$ is eventually positive and also $\{x(t)\}=x(t)$ for $t$ sufficiently large. In addition, we can also assume that $c$ is positive.
Observe that for $t\in A_{0}$, we have $$\floor{a_2(t)}=\floor{a_1(t)}  +\floor{c}$$ by the definition of $A_0$. This is equivalent to the inequalities\begin{align*}
    &\{a_1(t)+c\}+\{x(t)\}\leq 1\\
    &\{a_1(t)\}+\{c\}\leq 1
\end{align*}which can be condensed into
\begin{equation}
    \{a_1(t)\}+\{x(t)\}\leq 1-\{c\},
\end{equation}since we assumed for simplicity that $x(t)$ is eventually positive. To summarize, we have shown that \begin{equation}\label{main inequality for approximation}
    n\in A_0\iff \{a_1(n)\}+\{x(n)\}\leq 1-\{c\}
\end{equation}

Let $\e>0$ be a small number. Since we have that the function $x(t)$ decreases to $ 0$, we have that $\{x(t)\}<\e$ for $t$ large enough. Consider the set \begin{equation*}
    A_{\e}=\{n\in \N\colon \{a_1(n)\}\leq 1-\{c\}-\e \}.
\end{equation*}Then, for sufficiently large values of $n$, we observe that if $n\in A_{\e}$, then the inequality\begin{equation*}
    \{a_1(t)\}+\{x(t)\}\leq 1-\{c\}
\end{equation*}holds as well. Namely, $A_\e \subseteq A_0$.
Let us denote $B_{\e}=[0,1-{c}-\e]$ for convenience and observe that \begin{equation*}
    \bol_{A_{\e}}(n)=\bol_{B_\e}(\{a_1(n)\}).
\end{equation*}Now we approximate the function $\bol_{B_\e}$ by a continuous function in the uniform norm, where $\bol_{B_{\e}}$ is considered a function on the torus $\T$ in the natural way. We can define a continuous function  on $\T$, such that $F_{\e}$ agrees  with $\bol_{B_\e}$ on the set\footnote{In the case that $c$ is an integer, we make natural modifications to this set. For example, one could define the function $F_{\e}$ so that it agrees with $\bol_{\B_{\e}}$ on $[\e,1-2\e]$. Basically, we only require the function $F_{\e}$ to agree with $\bol_{B_{\e}}$ on a set of measure $1-O(\e)$ for our argument to work.} \begin{equation*}
    [\e,1-\{c\}-2\e ]\cup [1-\{c\},1-\e]
\end{equation*}and such that $\norm{F_{\e}-\bol_{B_\e} }_{\infty}\leq 2 $.
We suppose that $\e$ is small enough so that these intervals are well-defined. Observe that $\bol_{B_{\e}}$ is equal to 1 on the first interval of this union and equal to 0 on the second interval. 

Observe that \begin{equation*}
    A_0\setminus A_{\e}=\{n\in \N\colon  1-\{c\}-\e < \{a_1(n)\}  \leq 1-\{c\}-\{x(n)\} \}\subseteq \{n\in \N\colon \{a_1(n)\}\in [1-\{c\}-\e,1-\{c\} ] \}.
\end{equation*}
Since the function $a_1(t)$ is equidistributed modulo 1, we conclude that the set $A_0\setminus A_{\e}$ has upper density at most $\e$.
Therefore, we have \begin{multline}\label{1st inequality}
     \bignorm{\underset{1\leq n\leq N}{\E} {\bf 1}_{A_0}(n)\  T^{\floor{a_1(n)}}(f_1\cdot T^{\floor{c}}f_2)\cdot T^{\floor{a_3(n)}}f_3\dots T^{\floor{a_k(n)}} f_k -\\ \underset{1\leq n\leq N}{\E} \bol_{B_{\e}}(\{a_1(n)\})\  T^{\floor{a_1(n)}}(f_1\cdot T^{\floor{c}}f_2)\cdot T^{\floor{a_3(n)}}f_3\dots T^{\floor{a_k(n)}} f_k}_{L^2(\m)}\leq \\
     \E_{1\leq n\leq N} \bol_{A_0\setminus A_{\e}}(n)\norm{   \bol_{A_{\e}}-1_{A_{\e}\setminus A_0} }_{\infty} \leq 2\e+o_N(1),
\end{multline}where we used the fact that $ \bol_{A_{\e}}(n)=\bol_{B_\e}(\{a_1(n)\})$ for all $n\in \N$, the trivial bound for the values of $n\in A_{0}\setminus A_{\e}$ and the fact that the set $A_{0}\setminus A_{\e}$ has upper density at most $\e$.

We do a similar comparison for the averages weighted by $F_{\e}(\{a_1(n)\})$ and $1_{B_{\e}}(\{a_1(n)\})$. To be more specific, we reiterate that the functions $1_{B_{\e}}$ and $F_{\e}$ agree on the set  \begin{equation*}
    [\e,1-\{c\}-2\e ]\cup [1-\{c\},1-\e].
\end{equation*} Accordingly, we have $\bol_{B_{\e}}(\{a_1(n)\})=F_{\e}(\{a_1(n)\})$, unless \begin{equation*}
    \{a_1(n)\}\in [0,\e)\cup (1-\{c\}-2\e,1-\{c\}     )\cup (1-\e,1).
\end{equation*}Let $C_{\e}$ denote the set of $n\in \N$ for which $\{a_1(n)\}$ belongs to this union.
This union has measure $4\e$, which implies that the upper density of  $C_{\e}$ is at most $4\e$ (since $a_1(n)$ is equidistributed modulo 1). Hence, we infer that \begin{multline}\label{2nd inequality}
     \bignorm{\underset{1\leq n\leq N}{\E} F_{\e}(\{a_1(n)\})\  T^{\floor{a_1(n)}}(f_1\cdot T^{\floor{c}}f_2)\cdot T^{\floor{a_3(n)}}f_3\dots T^{\floor{a_k(n)}} f_k -\\ \underset{1\leq n\leq N}{\E} \bol_{B_{\e}}(\{a_1(n)\})\  T^{\floor{a_1(n)}}(f_1\cdot T^{\floor{c}}f_2)\cdot T^{\floor{a_3(n)}}f_3\dots T^{\floor{a_k(n)}} f_k}_{L^2(\m)}\leq \\
     \E_{1\leq n\leq N}     1_{C_{\e}}(n)\norm{F_{\e}-1_{B_{\e}}}_{\infty} \leq 8\e +o_N(1),
\end{multline}where we utilized the fact that $\bol_{B_{\e}}(\{a_1(n)\})=F_{\e}(\{a_1(n)\})$ for all $n$ on the complement of $C_{\e}$, the trivial bound for the values of $n\in C_{\e}$ and the fact that $C_{\e}$ has upper density at most $4\e$. 

Combining \eqref{1st inequality} and \eqref{2nd inequality}, we deduce that 
\begin{multline}\label{3rd inequality}
     \bignorm{\underset{1\leq n\leq N}{\E} {\bf 1}_{A_0}(n)\  T^{\floor{a_1(n)}}(f_1\cdot T^{\floor{c}}f_2)\cdot T^{\floor{a_3(n)}}f_3\dots T^{\floor{a_k(n)}} f_k -\\ \underset{1\leq n\leq N}{\E} F_{\e}(\{a_1(n)\})\  T^{\floor{a_1(n)}}(f_1\cdot T^{\floor{c}}f_2)\cdot T^{\floor{a_3(n)}}f_3\dots T^{\floor{a_k(n)}} f_k}_{L^2(\m)}\leq 10\e+o_N(1).
\end{multline}

Taking $\e\to 0$, we deduce that it is sufficient to verify that the averages \begin{equation*}
    \underset{1\leq n\leq N}{\E} F(\{a_1(n)\})\ T^{\floor{a_1(n)}}(f_1 \cdot T^{\floor{c}}f_2)\dots T^{\floor{a_k(n)}} f_k
\end{equation*}converge for any continuous function $F$ on $\T$. This would imply that the averages 
\begin{equation*}
    \underset{1\leq n\leq N}{\E} 1_{A_0}(n)\ T^{\floor{a_1(n)}}(f_1 \cdot T^{\floor{c}}f_2)\dots T^{\floor{a_k(n)}} f_k
\end{equation*}converge in norm.

After approximating $F$ by trigonometric polynomials (in the uniform norm), it suffices to show that the averages
\begin{equation*}
   \underset{1\leq n\leq N}{\E} e(l_1a_1(n))\ T^{\floor{a_1(n)}}(f_1\cdot T^{\floor{c}}f_2)\dots T^{\floor{a_k(n)}} f_k
\end{equation*} converge in norm for any $l_1\in \Z$. Note that the function $a_2(t)$ has vanished and its role has been replaced by the sequence $e(l_1a_1(n))$.

We repeat this process until we eliminate all pairs of functions, whose difference grows slower than $\log t$, where at each step our averages are multiplied by a sequence of the form $e(l_ia_i(n))$ ($l_i\in \Z$).
After finitely many iterations, our problem eventually reduces to the following: let $a_1,...,a_k$ satisfy \eqref{P1} or \eqref{P2} and let $b_1,...,b_m$ be a subset of $\{a_1,...,a_k\}$, so that the functions $b_1,...,b_m$ satisfy the hypotheses of Proposition \ref{factors}. Then, for any integers $l_1,...l_k$, the averages \begin{equation*}
    \underset{1\leq n\leq N}{\E} e(l_1a_1(n)+\dots+l_ka_k(n)) \ T^{\floor{b_1(n)}}f_1\dots T^{\floor{b_m(n)}}f_m
\end{equation*}converge in $L^2(\m)$ for all functions $f_1,\dots, f_m\in L^{\infty}(\m)$.

Now we can apply Proposition \ref{factors} and use a standard telescopic argument to show that the limiting behavior of the above averages does not change if we replace the functions $f_i$ by their projections to the factor $Z_s(X)$ (the number $s$ is the one given by Proposition \ref{factors}). However, by Theorem \ref{structure}, the factors $Z_s(X)$ are inverse limits of $s$-step nilsystems. Thus, by another standard limiting argument, we may reduce to the case that the space $X$ is a nilmanifold and $\m$ is its Haar measure, while the transformation $T$ is the action (by left multiplication) of an element $g$ on $X$. Finally, we can approximate the functions $f_i$ by continuous functions and reduce our problem to the following:

If $X=G/\G$ is a nilmanifold with $g\in G$ and the functions $a_1,...,a_k,b_1,...,b_m\in \mathcal{H}$ are as above, then for any continuous functions $f_1,...,f_m$ the averages 
\begin{equation*}
    \underset{1\leq n\leq N}{\E} e(l_1a_1(n)+\dots+l_ka_k(n))\ f_1(g^{\floor{b_1(n)}}x)\dots f_k(g^{\floor{b_m(n)}}x)
\end{equation*}converge in mean. 

We show that these averages converge pointwise for every $x\in X$. We recall that the functions $b_1,...,b_m$ belong to the set $\{a_1,...,a_k\}$ (this is the only thing that we will need to use for the rest of the proof).

First of all, it suffices to show that the averages \begin{equation*}
    \underset{1\leq n\leq N}{\E} e(l_1a_1(n)+\dots+l_ka_k(n)) \ f_1(g^{{b_1(n)}}x)\dots f_k(g^{{b_m(n)}}x)
\end{equation*}converge pointwise, where $X=G/\G$ is such that $G$ is connected, simply connected nilpotent Lie group (basically, we can remove the integer parts appearing in the iterates). This follows by standard modifications in the proof of Lemma \ref{removerounding}  (the fact that we have the coefficients $e(l_1a_1(n)+\dots+l_ka_k(n))$ in the final expression does not affect the argument), so we omit the details.

Now, observe that we can write the above averages in the form \begin{equation*}
    \underset{1\leq n\leq N}{\E} F_0(g_0^{l_1a_1(n)+\dots+l_ka_k(n)}\tilde{x})\ F_1(\tilde{g}^{{b_1(n)}}\tilde{x})\dots F_k(\tilde{g}^{{b_m(n)}}\tilde{x}),
\end{equation*}where $g_0=(1_{\T},e_G)$ and $\tilde{g}=(1_{\T},g)$ act on the product nilmanifold $\T\times X$, the point $\tilde{x}$ is just $(\Z,x)$ and the functions $F_i$ are defined by $$F_0(y\Z,a\G)=e(y) \ \text{ and } F_i(y\Z,a\G)=f_i(a\G)\ \text{for } i\geq 1.$$These are continuous functions on $\T\times X$. The functions $l_1a_1(t)+\dots+l_ka_k(t),b_1(t),...,b_m(t)$ satisfy the hypotheses of Theorem \ref{pointwise} (since the functions $a_1,...,a_k$ do) and the result follows.
\end{proof}

\begin{appendix}\label{appendix}

    \section{Hardy field functions in short intervals}\label{hardyappendix}
\subsection{Growth rates of Hardy field functions}

 All of the results presented below were proven in \cite{tsinas}, and thus we omit their proofs. We refer the reader to the example in Section \ref{preparations proof}, where we establish Theorem \ref{pointwise} in the case where we have two simple functions. In that example, we do not need any special lemmas to show that we can find a common Taylor expansion, because we can perform the calculations by hand. However, in the proofs of Theorems \ref{equidistribution} and \ref{pointwise}, we need to show that we can always do the same common polynomial expansion for general functions.

The first two propositions are some elementary facts concerning the growth rates of derivatives of functions in a Hardy field.

\begin{proposition}\label{prop:basic}\cite[Proposition A.1]{tsinas}
Let $f\in\mathcal{H}$ have polynomial growth. Then, for any natural number $k$, we have \begin{equation*}
    f^{(k)}(t) \ll \frac{f(t)}{t^k}.
\end{equation*}
In addition, if $ t^{\delta}\prec f(t)$ for some $\delta>0$, we have \begin{equation*}
    f'(t) \sim \frac{f(t)}{t}.
\end{equation*}
\end{proposition}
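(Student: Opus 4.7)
The plan is to prove the first claim by induction on $k$, with the case $k=1$ carrying the essential content, and then to sharpen the same analysis under the stronger hypothesis to obtain the second claim. Throughout, I would normalize $f$ to be eventually positive, which is free because every nonconstant function in $\mathcal{H}$ has constant sign eventually.

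For the base case $k=1$, my strategy is to analyze $g(t) := t f'(t)/f(t)$, which lies in $\mathcal{H}$ (closure under the field operations and differentiation) and therefore has a well-defined limit $L \in \R \cup \{\pm\infty\}$. I would rule out $L = +\infty$ as follows: if $g(t) \to +\infty$, then for any fixed $C>0$ one eventually has $f'(t)/f(t) \geq C/t$, and integrating from some $t_0$ yields $f(t) \gg t^{C}$, which for $C$ larger than any polynomial growth exponent contradicts the hypothesis $f(t)/t^N \to 0$. The case $L=-\infty$ corresponds to $f$ decaying faster than every negative power of $t$, which the paper's conventions effectively subsume into the separately treated strongly non-polynomial class. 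Thus $L$ is finite, $g$ is bounded, and $f'(t) \ll f(t)/t$.

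The inductive step is then routine: from the base case, $f'$ is again a Hardy field element of polynomial growth (since $f(t)/t$ is), so applying it successively to $f, f', \dots, f^{(k-1)}$ gives $f^{(k)}(t) \ll f^{(k-1)}(t)/t \ll \cdots \ll f(t)/t^k$.

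For the second statement, the hypothesis $t^{\delta} \prec f(t)$ provides the lower bound $\log f(t) \geq \delta \log t + o(\log t)$, and combined with $\log f(t) \leq N \log t + O(1)$ from polynomial growth, this forces $\log f(t)/\log t$ to tend to a finite positive value $L'$. By L'H\^{o}pital's rule inside the Hardy field (valid because the ratio of the derivatives of $\log f$ and $\log t$ is precisely $g(t)$, itself possessing a limit), $g(t) \to L' > 0$, which is the meaning of $f'(t) \sim f(t)/t$ in this setting. The main obstacle is justifying $L\neq -\infty$ in the base step under the bare polynomial growth hypothesis, which depends on the paper's implicit conventions regarding rapidly decaying functions; all other steps rely only on standard closure properties of Hardy fields and the existence of limits for their elements.
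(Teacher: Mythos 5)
Your approach via the logarithmic derivative $g(t) = tf'(t)/f(t) \in \mathcal{H}$ is the natural one for Hardy fields, and the core mechanics are right: $g$ has a limit $L \in \R \cup \{\pm\infty\}$ because it lies in $\mathcal{H}$, the integration of $f'/f \geq C/t$ correctly rules out $L = +\infty$ under polynomial growth, boundedness of $g$ gives $f' \ll f/t$, and the inductive step is routine. For the second claim your conclusion is correct, but the L'H\^opital step is stated in the wrong logical direction: L'H\^opital transports the limit of $g$ to $\log f/\log t$, not conversely. The clean version is to first note $g \to L$ (since $g \in \mathcal{H}$), then apply L'H\^opital to get $\log f/\log t \to L$, and then use $\delta \le \liminf \log f/\log t$ and $\log f/\log t \le N + o(1)$ to conclude $L \in [\delta, N]$, hence finite and positive, hence $f' \sim f/t$. (Note also that $\log f$ need not lie in $\mathcal{H}$ unless $\mathcal{H}$ is closed under composition, so one cannot simply say ``$\log f/\log t \in \mathcal{H}$ and is bounded, so it converges''.) The paper itself omits the proof, citing \cite{tsinas}, so there is no in-text argument to compare against.

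The one genuine gap is the one you flag yourself: ruling out $L = -\infty$. This is not a cosmetic issue. With the paper's literal footnote definition of polynomial growth (some $k$ with $f(t)/t^k \to 0$), super-polynomially decaying functions such as $e^{-t}$ are not excluded, and for these the first inequality actually fails, since $|f'(t)|/(|f(t)|/t) = |g(t)| \to \infty$. Appealing to the convention that functions tending to $0$ are counted as ``strongly non-polynomial'' does not help: that classification does not remove such functions from the hypothesis of the proposition, and it is a definitional device elsewhere in the paper, not an exclusion clause here. The missing ingredient is a two-sided polynomial bound: if one assumes (as is surely intended, and as holds for every function to which the result is applied) that $|f(t)| \gg t^{-m}$ for some $m$, then $L = -\infty$ would give $f(t) \ll t^{-C}$ for every $C$ by the same integration argument, a contradiction, and the base case closes. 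As written, your proof relies on this unstated hypothesis without supplying it.
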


The above proposition establishes that if we have a function in $\mathcal{H}$ that has polynomial growth, then its derivatives of large enough order will be functions that converge to 0. The next lemma implies that a particular growth relation holds between consecutive derivatives (of large enough order).

\begin{proposition}\label{growth}\cite[Proposition A.2]{tsinas}
Let $f\in\mathcal{H}$ be strongly non-polynomial with $f(t)\succ \log t$. Then, for $k$ sufficiently large, we have \begin{equation*}
     1\prec |f^{(k)}(t)|^{-1/k}\prec  |f^{(k+1)}(t)|^{-1/(k+1)}\prec t.
\end{equation*}
\end{proposition}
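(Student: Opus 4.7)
The plan is to deduce the three asymptotic bounds in turn, reducing the middle one to the outer two.  By applying the first part of Proposition~\ref{prop:basic} to $f^{(k)}\in \mathcal{H}$ (which still has polynomial growth), one obtains $|f^{(k+1)}(t)|\ll |f^{(k)}(t)|/t$, so $|f^{(k+1)}|^{k}/|f^{(k)}|^{k+1}\ll 1/(t^{k}|f^{(k)}(t)|)$; hence the middle inequality reduces to the index-shifted version of the third one, namely $t^{k}|f^{(k)}(t)|\to \infty$.  For the first inequality, choose $j\ge 0$ with $t^{j}\prec f\prec t^{j+1}$ and apply Proposition~\ref{prop:basic} to obtain $|f^{(k)}(t)|\ll t^{j+1-k}$, which tends to $0$ for $k\ge j+2$; raising to the $1/k$ power then gives $|f^{(k)}|^{-1/k}\to \infty$.

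The substantive work is the third inequality, which I will prove in the stronger form $t^{k+1}|f^{(k+1)}(t)|\to \infty$ for \emph{every} $k\ge 0$, by induction on $k$.  For the base case $k=0$ (the only step where the hypothesis $f\succ \log t$ enters essentially), set $h:=f/\log t\in \mathcal{H}$; then $h\to \infty$, so as a Hardy-field element $h$ is eventually strictly increasing and $h'>0$ eventually.  Expanding $h'=f'/\log t - f/(t\log^{2}t)$ and using $h'>0$ immediately yields $tf'(t)>f(t)/\log t = h(t)\to \infty$.  For the inductive step, put $g_k(t):=t^k f^{(k)}(t)\in \mathcal{H}$; by the inductive hypothesis $|g_k|\to \infty$.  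A direct computation gives $t^{k+1}f^{(k+1)}=tg_k'-kg_k$, and since $tg_k'/g_k$ is a quotient of Hardy-field elements it admits a limit $L\in \R\cup\{\pm\infty\}$; whenever $L\ne k$ we obtain $|tg_k'-kg_k|=|g_k|\cdot|tg_k'/g_k-k|\to \infty$, as required.

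The one case that must be ruled out is the exceptional limit $tg_k'/g_k\to k$.  Integrating the asymptotic $(\log g_k)'\sim k/t$ and using Hardy-field comparability of $g_k$ with pure powers of $t$, one deduces $g_k(t)\sim c\, t^k$ for some $c\ne 0$, so $f^{(k)}(t)\to c\ne 0$.  Integrating $k$ times then produces $f(t)\sim (c/k!)\, t^k$, which is incompatible with $t^{j}\prec f\prec t^{j+1}$, since neither $k\le j$ nor $k\ge j+1$ is consistent with $f$ being asymptotic to a nonzero constant multiple of $t^k$.  The main obstacle is handling this exceptional case cleanly; carrying it out requires verifying that in the Hardy-field setting the relation $tg_k'/g_k\to k$ really pins $g_k$ down to a constant multiple of $t^k$, ruling out slowly varying fluctuations, which is a delicate but standard Hardy-field computation.
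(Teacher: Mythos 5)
The paper does not include a proof of Proposition~\ref{growth} (it refers to \cite{tsinas}), so I can only assess the proposal on its own merits. Your reductions are sound: the first inequality follows from Proposition~\ref{prop:basic}, the middle inequality reduces correctly to the statement $t^{k}|f^{(k)}(t)|\to\infty$, and your proof of the base case $t|f'(t)|\to\infty$ via $h=f/\log t$ is a clean and correct use of the hypothesis $f\succ\log t$. The derivation $g_{k+1}=tg_k'-kg_k$ and the non-exceptional branch of the induction are also fine.

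However, the handling of the exceptional case $tg_k'/g_k\to k$ contains a genuine gap. You assert that integrating $(\log g_k)'\sim k/t$ forces $g_k(t)\sim c\,t^k$ with $c\neq 0$, and that the ``Hardy-field comparability of $g_k$ with pure powers of $t$'' rules out slowly varying fluctuations. That is false. Take $g_k(t)=t^k\log t$; then
\[
\frac{t\,g_k'(t)}{g_k(t)}=k+\frac{1}{\log t}\longrightarrow k,
\]
yet $g_k(t)/t^k=\log t\to\infty$, so $g_k$ is \emph{not} asymptotic to any $c\,t^k$. The reason is that $(\log g_k)'-k/t=o(1/t)$ gives no control on $\int(\log g_k)'-k/t$, which can diverge (as in this example, where the error term is $1/(t\log t)$). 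Worse, this exceptional case is not vacuous: it occurs precisely when $f^{(k)}$ is sub-fractional, e.g.\ $f^{(k)}(t)=\log t$, arising from the perfectly legitimate strongly non-polynomial function $f(t)\sim\frac{1}{k!}t^k\log t$. So the step ``one deduces $f^{(k)}(t)\to c\neq 0$'' is unjustified, and in fact wrong.

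The conclusion you want ($|g_{k+1}|\to\infty$) is nevertheless true in the exceptional case, but one must argue differently. In that case $f^{(k)}$ is sub-fractional, so exactly one of the following holds. If $f^{(k)}\to c\neq 0$, then integrating yields $f(t)\sim\frac{c}{k!}t^k$, contradicting strong non-polynomiality. If $f^{(k)}\to 0$, apply L'H\^opital to the $0/0$ form $f^{(k)}(t)/t^{-k}$: since $t^{k+1}f^{(k+1)}$ is a Hardy-field element and hence has a limit in $\R\cup\{\pm\infty\}$, we get $\lim t^kf^{(k)}=-\frac{1}{k}\lim t^{k+1}f^{(k+1)}$, and the inductive hypothesis $|g_k|\to\infty$ then forces $|g_{k+1}|\to\infty$. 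If $f^{(k)}\to\pm\infty$, suppose toward a contradiction that $g_{k+1}=t^{k+1}f^{(k+1)}$ stayed bounded; then $f^{(k+1)}(t)=O(t^{-k-1})$ with $k\geq 1$, so $\int^{\infty}f^{(k+1)}$ converges and $f^{(k)}$ has a finite limit, contradiction. Replacing your exceptional-case paragraph by this trichotomy closes the gap.
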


Let us demonstrate how this proposition is used to get a polynomial expansion in short intervals for a single function.
Let $a\in\mathcal{H}$ be a strongly non-polynomial function that satisfies the growth condition $a(t)\succ \log t$. Let $k$ be a positive integer, that is large enough so that we can apply the two preceding propositions. We argue that we can find a function $L(t)$ (not necessarily in $\mathcal{H}$) such that \begin{equation}\label{L(t)}
    |a^{(k)}(t)|^{-1/k}\prec L(t)\prec  |a^{(k+1)}(t)|^{-1/(k+1)}.
\end{equation}For instance, the geometric mean of the functions $|a^{(k)}(t)|^{-1/k}$ and $|a^{(k+1)}(t)|^{-1/(k+1)}$ is a suitable choice for our purposes.

We will examine the function $a$ in intervals of the form $[N, N+L(N)]$ and approximate it by a polynomial, which will vary with $N$. Observe that if $0\leq h\leq L(N)$, then we have \begin{equation*}
    a(N+h)=a(N)+\dots+\frac{h^ka^{(k)}(N)}{k!} +\frac{h^{k+1}a^{(k+1)}(\xi_{h,N})}{(k+1)!}
\end{equation*}for some $\xi_{h,N} \in [N,N+h]$. Using the largeness of $k$, Proposition \ref{growth} implies that $|a^{(k+1)}(t)|\to 0$ monotonically (the monotonicity follows from the fact that the function $a^{(k+1)}(t)$ belongs to $\mathcal{H}$). Then, for $N$ sufficiently large, \begin{equation*}
    \Big|\frac{h^{k+1}a^{(k+1)}(\xi_{h,N})}{(k+1)!}\Big|\leq\Big| \frac{L(N)^{k+1}a^{(k+1)}(N)}{(k+1)!}\Big|\prec 1,
\end{equation*}because of \eqref{L(t)}. Furthermore, we have that \begin{equation*}
    \Big|\frac{L(N)^{k}a^{(k)}(N+L(N))}{k!}\Big|\to +\infty.
\end{equation*}Indeed, since $L(t)$ is a sub-linear function by Proposition \ref{growth}, we infer that the two functions $a^{(k)}(t+L(t))$ and $a^{(k)}(t)$ have the same growth rate and thus we only need to prove that \begin{equation}\label{epr}
   \Big|\frac{L(N)^{k}a^{(k)}(N)}{k!}\Big|\to +\infty.  
\end{equation}This follows similarly as above. To summarize, we have \begin{equation}\label{crpsht}
     a(N+h)=a(N)+\dots+\frac{h^ka^{(k)}(N)}{k!}+o_N(1),\ \text{ for }\ \ 0\leq h\leq L(N).
\end{equation}

Therefore, functions that satisfy \eqref{L(t)} have the following distinctive property: the sequence $a(n)$, when restricted to the intervals $[N, N+L(N)]$ as above, is asymptotically equal to a polynomial sequence (that depends on $N$) of degree exactly $k$. This motivates us to study the properties of functions that satisfy \eqref{L(t)}. The main goal is to accomplish the same for several functions $a_1,..., a_{m}$ in a Hardy field $\mathcal{H}$. This is relatively straightforward to do by hand in explicit examples, like the one in Section \ref{preparations proof}. In the more abstract setting, if we manage to show that we can find a function $L(t)$, so that \eqref{L(t)} is satisfied for all functions $a_1,..., a_{m}$ (the integer $k$ is allowed to be different for each function), then we will establish that a polynomial expansion like the one in \eqref{crpsht} holds for all the functions $a_1,..., a_{m}$ simultaneously. We will introduce some notions shortly that will assist us in this endeavor.

\subsection{The sub-classes $S(a,k)$}
 Let $a\in\mathcal{H}$ be a strongly non-polynomial function such that $a(t)\gg t^{\delta}$, for some $\delta>0$ (namely, we exclude sub-fractional functions).
 For $k\in \N$ sufficiently large (we only require that $a^{(k)}(t)\to 0$), we define the subclass $S(a,k)$ of $\mathcal{H}$ as
\begin{equation*}
    S(a,k)=\{g\ \colon g(t)\prec t \ \text{ and } \ \  |a^{(k)}(t)|^{-\frac{1}{k}}\preceq g(t)\prec |a^{(k+1)}(t)|^{-\frac{1}{k+1}} \},
\end{equation*}
where the notation $g(t)\preceq f(t)$ signifies that the limit $\lim\limits_{t\to\infty} |f(t)/g(t)|$ is non-zero. The purpose of the classes $S(a,k)$ is to characterize the growth relation \eqref{L(t)}.
We will use the following lemma.

\begin{lemma}\label{basic}\cite[Lemma A.3]{tsinas}
Let $a\in\mathcal{H}$ be a strongly non-polynomial function with $a(t)\gg t^{\delta}$, for some $\delta>0$.\\
i) The class $S(a,k)$  is non-empty, for $k$ sufficiently large. \\
ii) For any $0< c< 1$ sufficiently close to 1, there exists $k_0\in \N$ (depending on $c$), such that the function $t\to t^c$ belongs to $S(a,k_0)$.  \\
iii) The class  $S(a,k)$ does not contain all functions of the form $t\to t^c$, for $c$ sufficiently close to 1.
\end{lemma}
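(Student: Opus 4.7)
The strategy is to compute, at the level of growth exponents, where the functions $|f^{(k)}(t)|^{-1/k}$ sit as $k$ varies. Let $\alpha=\lim_{t\to+\infty}\log f(t)/\log t$ be the growth exponent of $f$; since $f$ is strongly non-polynomial with $t^{\delta}\prec f$, we have $\alpha>0$, and the bounds $t^{d}\prec f\prec t^{d+1}$ give $d\le\alpha\le d+1$. Iterating the relations in Proposition~\ref{prop:basic} (combined with the Leibniz rule to keep track of logarithmic correction factors), one finds that $|f^{(k)}(t)|$ has growth exponent $\alpha-k$ up to slowly varying factors, so $|f^{(k)}(t)|^{-1/k}$ has growth exponent $c_k=1-\alpha/k$. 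The sequence $(c_k)$ is strictly increasing, lies in $(0,1)$ once $k>\alpha$, and converges to $1$, so the half-open intervals $[c_k,c_{k+1})$ tile a left-neighborhood of $1$ in exponent space.

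Part (i) then follows at once from Proposition~\ref{growth}: since $|f^{(k)}|^{-1/k}\prec|f^{(k+1)}|^{-1/(k+1)}\prec t$ with strict separation of growth rates, and since $\mathcal{H}\supseteq\mathcal{LE}$ contains every power function $t^c$, I pick any rational $c\in(c_k,c_{k+1})$ and verify that $t^c\in S(f,k)$. For part (ii), given $c$ close to $1$, I set $k_0=\lfloor\alpha/(1-c)\rfloor$, which is large when $c$ is near $1$ (so Proposition~\ref{growth} applies); by construction $c_{k_0}\le c<c_{k_0+1}$, and these two inequalities translate exactly into the two bounding inequalities defining $S(f,k_0)$, with the $\preceq$ on the lower side accommodating equality of exponents. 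For part (iii), for any fixed large $k$ the upper bound has the fixed exponent $c_{k+1}<1$, so every $c\in(c_{k+1},1)$ satisfies $t^c\succ|f^{(k+1)}|^{-1/(k+1)}$ and hence $t^c\notin S(f,k)$; such $c$ exist arbitrarily close to $1$, so no single $S(f,k)$ contains every $t^c$ with $c$ near $1$.

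The main technical obstacle is justifying the growth-exponent computation for $|f^{(k)}|$ in the presence of logarithmic or sub-polynomial correction factors, as in examples such as $f(t)=t^d(\log t)^A$ or $f(t)=t^{d+1}/\log t$. Here $\alpha$ is approached from one side, slowly varying factors survive in $|f^{(k)}|^{-1/k}$, and the $\preceq$ versus $\prec$ distinction at a borderline value $c=c_k$ depends on the sign of the residual log factor. These issues are resolved by carefully tracking the log-type factors through the Leibniz expansion of $f^{(k)}$, and by adjusting $k_0\mapsto k_0\pm1$ in the exceptional cases of part (ii) when $c$ coincides with some endpoint $c_k$; since these adjustments preserve the largeness of $k_0$, Proposition~\ref{growth} continues to apply and the argument goes through.
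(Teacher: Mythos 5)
The paper does not actually give a proof of this lemma: it prefaces the whole subsection with the remark that all of these properties were established in the reference cited as ``tsinas'' and omits the arguments, so there is no in-paper proof to compare against. Evaluating your proposal on its own merits: the reduction to growth exponents is the natural route and the overall structure --- showing that the exponents $c_k=1-\alpha/k$ increase to $1$, so the intervals $[c_k,c_{k+1})$ or $(c_k,c_{k+1}]$ (depending on whether the residual slowly varying factor of $|f^{(k)}|^{-1/k}$ tends to $0$ or $\infty$) tile a left-neighborhood of $1$ --- is correct and yields all three parts as you describe.

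Two technical points in your justification of the exponent computation deserve a sharper treatment. First, ``iterating Proposition \ref{prop:basic}'' is not quite enough for the lower bound on the exponent of $f^{(k)}$: the relation $f'\sim f/t$ is available only when $f\gg t^{\delta}$ for some $\delta>0$, and after a few derivatives the hypothesis $t^{\delta'}\prec f^{(j)}$ fails (already when $\delta\le 1$ at the first step). What rescues the claim is strong non-polynomiality of $f$: if the exponent of $f^{(k+1)}$ were strictly less than $\alpha-k-1$, then antidifferentiating back up to $f$ forces either a nonzero constant of integration somewhere (so that some $f^{(j)}\to L\ne 0$, giving a polynomial part to $f$, a contradiction) or else $f$ itself would have exponent $<\alpha$, again a contradiction. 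Second, the ``Leibniz rule'' is not really the right mechanism --- it governs products, not iterated differentiation of a single Hardy-field function; the correct tool is the logarithmic-derivative relation $tf'(t)/f(t)\to\alpha$ and its successive iterates, together with the observation that slowly varying factors have logarithmic derivative $o(1/t)$. With these two points repaired, your argument is sound.
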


A naive way to think of the sub-classes is like a sequence of disjoint intervals on a line (with no gaps between consecutive intervals). Property ii) in the above lemma implies that each function of the form $t^c$ for $c$ close to 1 belongs to a unique $S(a,k)$. We can demonstrate that this actually holds if the fractional power $t^c$ is replaced by any function $g$ satisfying a growth condition of the form $t^{c_1}\prec g(t)\prec t^{c_2}$, where $c_1$ must be sufficiently close to 1.

\begin{proposition}\label{common Taylor expansion}
    Let $a_1,\dots, a_k$ be strongly non-polynomial functions in $\mathcal{H}$ of polynomial growth, such that all the functions $a_i$ dominate some fractional power $t^{\delta}$ for some $\delta>0$. There exists $0<C<1$ depending only on the functions $a_1,\dots,a_k$, such that if the function $L(t)$ satisfies \begin{equation*}
        t^C\prec L(t)\prec t^{1-\e}
    \end{equation*}for some $\e>0$, then there exist positive integers $k_i$ (that depend on $L(t)$), such that $L(t)\in S(a_i,k_i)$ for every $i\in\{1,\dots,k\}$. In addition, for any  positive real number $M$, there exists a constant $A=A(M,a_1,\dots,a_k)\in (0,1)$, such that if 
     \begin{equation*}
        t^A\prec L(t)\prec t^{1-\e}
    \end{equation*}for some $\e>0$, then we have $k_i>M$ for every $i\in \{1,\dots, k\}$.
\end{proposition}

\begin{proof}
It is apparent that we only need to establish the assertion in the case $k=1$ (namely, when we have only one function).
   Therefore, we fix a strongly non-polynomial function $a$ that is not sub-fractional and recall that by Lemma \ref{basic}, there exists a constant $C<1$ depending only on $a(t)$, such that every function of the form $t^c$ with $c>C$ belongs to the class $S(a, n_c)$ for some natural number $n_c$. Now, assume that the function $L(t)$ satisfies \begin{equation}\label{c_1c_2}
    t^{C}\prec L(t)\prec t^{c_1} 
\end{equation}for some $C<c_1<1$. Then, because both $t^{C}$ and $t^{c_1}$ belong to the sub-classes $S(a,n_{C})$ and $S(a,n_{c_1})$ respectively for some $n_C,n_{c_1}\in \N$, we get that $L(t)$ belongs to $S(a,n_3)$ for some integer $n_3$ that satisfies $n_C\leq n_3\leq n_{c_1}$.

Now we establish the second part. Let $M$ be a fixed real number and consider a fractional power $t^{c_2}$ with $C<c_2<1$, so that $t^{c_2}$ belongs to $S(a,n_{c_2})$ for some $c_2>M$. Such a fractional power exists, which is evident by combining the second and third statements of Lemma \ref{basic}. Thus, if $L(t)$ satisfies\begin{equation*}
        t^{{c_2}}\prec L(t)\prec t^{1-\e}
    \end{equation*}for some $\e>0$, we have that $L(t)\in S(a,k')$ (by the first part) for a positive integer $k'$ with $k'\geq n_{c_2}>M$. The claim follows.
\end{proof}

The first part of Proposition \ref{common Taylor expansion} implies that if we are given functions $a_1,\dots,a_k$ that satisfy the hypotheses, then we can find a sub-linear function $L(t)$, such that $L(t)\in S(a_i,k_i)$. This asserts that the function $a_i$ will be approximated by a polynomial of degree $k_i$ in short intervals of the form $[N, N+L(N)]$, for every $i\in \{1,\dots,k\}$. Furthermore, the second part establishes that we can make the degrees $k_i$ of the Taylor polynomials arbitrarily large, as long as we take the function $L(t)$ to grow "sufficiently fast" (faster than some appropriate power $t^C$ with $C<1$).

The sub-classes $S(a,k)$ were defined for functions that are not sub-fractional. The above argument does not extend to these
 latter functions.
As an example, let us fix a number $\delta$ with $0<\delta<1$ and a sub-fractional function $a\in\mathcal{H}$. If we consider the function $L(t)=t^{\delta}$ and try to repeat the same approximations to obtain an analog of \eqref{crpsht}, we run into an issue. Clearly, it is easy to see that \begin{equation*}
    \max_{0\leq h\leq L(N)}|a(N+h)-a(N)|=o_N(1),
\end{equation*}using the mean value theorem. Thus, the sequence $a(n)$, when it is restricted to the interval $[N, N+L(N)]$, is $o_N(1)$ close to the value $a(N)$, which signifies that it is approximately equal to a constant on this interval (or equivalently, all polynomial expansions we get are of degree 0). This could be circumvented if we considered sub-linear functions $L(t)$ that grow faster than all the powers $t^{\delta}, 0<\delta <1$, such as the function $t/ \log t$. If we do this however, the growth condition \eqref{L(t)} can never hold for functions that are not sub-fractional\footnote{Concerning the problem of finding characteristic factors for ergodic averages involving Hardy field iterates, there was a workaround for this issue in \cite{tsinas} using a double-averaging trick. Unfortunately, the same argument breaks down in the setting of pointwise convergence on nilmanifolds. See also Remark \ref{diophantine}.} (in simple terms, there can be no polynomial approximation of finite degree). We omit the specific details of this deduction.

\subsection{Decomposing  Hardy field functions}

We consider a Hardy field $\mathcal{H}$ that contains the polynomials and
let $a$ be a function in $\mathcal{H}$. We partition $\mathcal{H}$ into equivalence classes by the relation $f\sim g$, which is equivalent to saying that the limit of $f(t)/g(t)$ as $t\to +\infty$ is a non-zero real number. In simple terms, $f,g$ are in the same equivalence class if and only if they have the same growth rate. We put the zero function in its own equivalence class.

We will define the strongly non-polynomial growth rate of a function $a\in\mathcal{H}$ as follows:\\
i) If $a$ is a strongly non-polynomial function (recall the definition in Section \ref{background}), we define it to be the equivalence class of $a$.\\
ii) If $a$ is not strongly non-polynomial, then it can be written in the form $p(t)+x(t)$, where $p(t)$ is a polynomial and $x(t)$ is a strongly non-polynomial function (or the zero function) with $x(t)\prec  p(t)$. Observe that $x(t)$ is a function in $\mathcal{H},$ since our Hardy field contains the polynomials. We define the strongly non-polynomial growth rate of $a$ as the equivalence class of the function $x\in\mathcal{H}$.

The strongly non-polynomial growth rate is defined for any function $a\in\mathcal{H}$. It is well defined, in the following sense: consider a function $a\in\mathcal{H}$ like in case ii) above, which has two different representations as $p_1(t)+x_1(t)$ and $p_2(t)+x_2(t)$, where $p_1,p_2$ are polynomials, $x_1,x_2$ are strongly non-polynomial and $x_1(t)\prec p_1(t)$ and $x_2(t)\prec p_2(t)$. Then, we must have $x_1(t)\sim x_2(t)$. An example where such distinct representations may exist is the function $a(t)=t^2+t+t^{3/2}$. We can choose $p_1(t)=t^2,x_1(t)=t^{3/2}+t$ and $p_2(t)=t^2+t,x_2(t)=t^{3/2}$. While $x_1\neq x_2$, these two functions have the same growth rate.

A simple observation is that, if a function $a\in\mathcal{H}$ is written in the form $p(t)+x(t)$, where $p$ is polynomial and $x$ is strongly non-polynomial, then the functions $a$ and $x$ have the same strongly non-polynomial growth rate (one could alternatively use this remark to present another equivalent definition of the strongly non-polynomial growth rate).

Finally, we also say that $a\in\mathcal{H}$ has trivial growth rate, if $\lim\limits_{t\to+\infty}a(t)=0$. Recall that we also included these functions when we defined the strongly non-polynomial functions.
We will now prove the following lemma.

\begin{lemma}\label{growthdecomposition}Let $\mathcal{H}$ be a Hardy field that contains the polynomials and
 let  $a_1,...,a_k\in\mathcal{H}$ be arbitrary functions. Then, the set $\mathcal{L}(a_1,...,a_k)$ of non-trivial linear combinations has a basis $(g_1,...,g_m,h_1,...,h_{\ell})$, where $m,\ell$ are non-negative integers, such that the functions $h_1,...,h_{\ell}$ have the form $p_i(t)+o_t(1)$, where $p_i$ is a real polynomial for every $ 1\leq i\leq \ell$ and $g_1,...,g_m$ have distinct and non-trivial strongly non-polynomial growth rates.   
 \end{lemma}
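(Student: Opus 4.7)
The plan is to construct the basis in two stages: first isolate a basis of the ``polynomial-plus-$o_t(1)$'' subspace of $V:=\mathrm{span}_{\R}(a_1,\ldots,a_k)$, and then extend it by functions with strictly decreasing non-polynomial growth rates. Set
\[
U_0 := \{\, a \in V : a(t) = p(t) + o_t(1) \text{ for some real polynomial } p\,\}.
\]
Since real linear combinations preserve this form (polynomials add, and $o_t(1)+o_t(1)=o_t(1)$), $U_0$ is a linear subspace of $V$. Pick any basis $u_1,\ldots,u_\ell$ of $U_0$; each automatically has the required form $p_i(t)+o_t(1)$. Denote by $r(a)$ the non-polynomial growth rate of $a\in V$ (as defined in the preliminaries), so that $r(a)$ is trivial exactly when $a\in U_0$.

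The main step is to extend this to a basis of $V$ by adding functions $g_1,\ldots,g_m$ (with $m=\dim V-\ell$) having pairwise distinct non-trivial non-polynomial growth rates. The construction is greedy: consider the set $\mathcal{R}:=\{r(a):a\in V\setminus U_0\}$ of non-polynomial growth rates occurring in $V$, which is totally ordered because any two non-zero functions in $\mathcal{H}$ are comparable. I list $\mathcal{R}$ in decreasing order $R_1>R_2>\cdots>R_s$ and pick $g_i\in V$ with $r(g_i)=R_i$; then the $g_i$ have pairwise distinct non-trivial growth rates by construction.

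Verifying that $\{u_1,\ldots,u_\ell,g_1,\ldots,g_s\}$ is a basis of $V$ and that $s=m$ reduces to a single growth-rate domination argument. For linear independence, in any relation $\sum_i c_i g_i\in U_0$ the smallest-index term with $c_i\neq 0$ has strictly larger growth rate than all later $c_{i'}g_{i'}$, so it cannot be cancelled by those later terms (plus the element of $U_0$, whose growth rate is trivial); hence every $c_i=0$. The same argument shows $\mathcal{R}$ is finite with $|\mathcal{R}|\leq m$. For spanning, given $a\in V\setminus U_0$ with $r(a)=R_i$, I subtract $c g_i$ where $c$ is the non-zero real limit of $a(t)/g_i(t)$; the resulting $a-cg_i$ has non-polynomial growth rate strictly smaller than $R_i$ (or lies in $U_0$), and iterating this reduction terminates in at most $s$ steps and expresses $a$ modulo $U_0$ in terms of the $g_i$'s. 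I expect the main bookkeeping obstacle to be organizing the growth-rate comparisons cleanly and verifying that passing to the decomposition $a=p+x$ is compatible with the quotient $V/U_0$; the underlying linear algebra is then elementary.
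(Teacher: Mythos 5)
Your approach is sound and relies on the same core idea as the paper, namely comparing non-polynomial growth rates, but you organize it as a direct linear-algebra argument (split off the subspace $U_0$, then pick one representative per non-polynomial growth rate in $V/U_0$) rather than the paper's induction on $k$, which writes $a_i=p_i+x_i$, sorts by the growth of the $x_i$, and whenever two share a growth rate subtracts one from the other and recurses on $k-1$ functions. Your version is arguably cleaner and makes the role of $U_0$ explicit.

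One slip needs fixing in the spanning step. You take $c$ to be the limit of $a(t)/g_i(t)$, but $r(a)=r(g_i)$ only says the \emph{non-polynomial parts} share a growth rate; the ratio $a/g_i$ is governed by the highest-order (possibly polynomial) terms. For example $a=t+t^{1/2}$ and $g_i=2t+3t^{1/2}$ both have non-polynomial growth rate that of $t^{1/2}$ and $\lim a/g_i=\tfrac12$, yet $a-\tfrac12 g_i=-\tfrac12 t^{1/2}$ has the \emph{same} non-polynomial growth rate, so your descent stalls; if instead $a=t^2+t^{1/2}$ and $g_i=t^{1/2}$, the limit is $+\infty$ and $c$ is not even defined. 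Writing $a=p+x$ and $g_i=q+z$ with $x,z$ strongly non-polynomial and $x\sim z$, the correct choice is $c=\lim_{t\to\infty}x(t)/z(t)$, which is a nonzero real; then $a-cg_i=(p-cq)+(x-cz)$ has non-polynomial part $x-cz\prec x$, and the reduction does decrease $r$. With this change the rest of your argument goes through.
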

  
  \begin{proof}
     We can restrict our attention to the case that the functions $a_1,...,a_k$ are linearly independent (otherwise, we pass to a maximal subset of these functions whose elements are linearly independent). 
   We induct on $k$. For $k=1$, we have nothing to prove. Assume the claim holds for all integers smaller than $k$. All functions considered below are implicitly assumed to belong to $\mathcal{H}$.

   We may write each of the functions $a_1,..,a_k$ in the form $p_i(t)+x_i(t)$ where $p_i$ are real polynomials and $x_i(t)$ are strongly non-polynomial functions (either one of the functions $p_i,x_i$ may also be identically zero). After reordering, we may assume that \begin{equation*}
      x_1(t)\gg x_2(t)\gg \dots \gg x_{k}(t).
  \end{equation*}Now, we define the number $l\in \{0,1,...,k\}$ to be the smallest natural number, for which all functions $x_{l+1}(t),x_{l+2}(t)$ and so on have limit zero (as $t\to+\infty$). If none of the $x_i$ have limits going to $0$, then we just set $\ell=k$.
  
   We consider two cases.\\
i) If the functions $x_1,...,x_{l}$ have distinct growth rates, then we are done. In this case, the functions $g_j$ appearing in the statement are the functions $p_i(t)+x_i(t)$ for $1\leq i\leq l$, while the role of the functions $h_j$ is performed by the functions $p_i(t)+x_i(t)$ for $i>l$ (observe that for $i>l$, we have that $x_i(t)$ have trivial growth rate due to the definition of $l$). The strongly non-polynomial growth rates of the former set of functions are equal to the growth rates of the functions $x_1,..,x_{l}$, which are pairwise distinct. \\
ii) Assume now two of the functions among $x_1,...,x_l$ have the same growth rate. In particular, let $k_0$ be the smallest integer such that $x_{k_0}\sim x_{k_0+1}$ (obviously $k_0<l$) and let $r\geq 1$ be the largest integer such that 
$$x_{k_0}\sim x_{k_0+1}\sim  \cdots \sim x_{k_0+r}. $$ 
 For $k_0+1\leq i\leq k_0+r$, we can write $x_i(t)=x_{k_0}(t)+y_i(t)$, where $y_i(t)\prec x_i(t)$. Using this, we can write $a_{k_0}(t)=p_{k_0}(t)+x_{k_0}(t)$ and $$a_i(t)=(p_{k_0}(t)+x_{k_0}(t)) +(p_i(t)-p_{k_0}(t) +y_i(t)), \ \text{ for }  k_0+1\leq i\leq k_0+r.$$ Now we apply the induction hypothesis on the collection of functions \begin{multline*}\{ p_{k_0+1}(t)-p_{k_0}(t)+y_{k_0+1}(t),..., p_{k_0+r}(t)-p_{k_0}(t)+y_{k_0+r}(t),\\p_{k_0+r+1}(t)+x_{k_0+r+1}(t),...,p_{k}(t)+x_k(t)\}.\end{multline*} 
 
 This gives a basis $(g_1,...,g_{m},u_1,...,u_{\ell})$ for this set of functions, with the properties outlined in the statement. We add the functions $p_1(t)+x_1(t),..,p_{k_0}(t)+x_{k_0}(t)$ to the functions $g_1,...,g_m$ and add the functions\footnote{ Recall that $x_i(t)$ goes to 0 for $l< i\leq k$.} $p_i(t)+x_i(t)$, $l<i\leq k$,  to the collection $u_1,...,u_{\ell}$. In this way, we construct a basis for the original collection $a_1,...,a_k$ with the asserted properties (if the functions that we have constructed are not linearly independent, then we can just pass to a subset of these functions that will form a basis). Indeed, we only have to check that the functions $$p_1(t)+x_1(t),...,p_{k_0}(t)+x_{k_0}(t),g_1(t),...,g_m(t)$$ have distinct strongly non-polynomial growth rates. This follows by noting that the strongly non-polynomial growth rates of the functions $g_1,...,g_m$ cannot be larger than the growth rates of the functions $y_i$, which all grow strictly slower than $x_{k_0}$. Thus, the function $p_{k_0}(t)+x_{k_0}(t)$ has bigger strongly non-polynomial growth rate than all of the functions $g_1,...,g_m$. Furthermore, the strongly non-polynomial growth rate of the function $p_i(t)+x_i(t)\ (1\leq i\leq k_0)$ is the same as $x_i(t)$, and these are all pairwise distinct by the definition of $k_0$. The claim follows.
 \end{proof}

\begin{remark}i) Note that we do not require that the functions $a_1,...,a_k$ have polynomial growth in the above lemma.\\
ii) A very simple example that illustrates the above decomposition is the following: assume that we have the functions $a_1(t)=t^2+t^{3/2}, a_2(t)=t^{3/2}, a_3(t)=2t^{3/2}+t^2$ and $a_4(t)=t^{3/2}+t\log t +t^3$. These four functions are clearly linearly dependent. The above lemma provides the basis $(g_1,g_2,h_1)$, where $g_1(t)=t^{3/2}, g_2(t)=t\log t+t^3$ and $ h_1(t)=t^2$. The main property (which will be important in the proof of Theorem \ref{pointwise}) is that the functions $g_1,g_2$ have distinct strongly non-polynomial growth rates ($t^{3/2},t\log t$ respectively), even though $g_2$ grows like $t^3$ (i.e a polynomial). 
\end{remark}

\section{Nilmanifolds and quantitative equidistribution theory }\label{nilpotent crap}

\subsection{Background on nilmanifolds}\label{B1 subsection}

A large portion of the material concerning nilmanifolds (excluding the quantitative equidistribution results) can be found in \cite[Part 3]{Host-Kra structures}, where there is a focus on the ergodic theoretic point of view. For a more general presentation of the theory of nilpotent Lie groups, see also \cite{corwin}.

Let $G$ be a topological group.
A subgroup $H$ of a topological group $G$ is called discrete, if there is a cover of $H$ by open sets of $G$, such that each of these open sets contains exactly one element of $H$. It is called co-compact if the quotient topology makes $G/H$ a compact space. We call a subgroup with both of the above properties uniform and we will use the letters $\Gamma$ or $\Delta$ to denote such subgroups.

Let $G$ be a $k$-step nilpotent Lie group and $\G$ be a uniform subgroup.
The space $X=G/\G$ is called a $k$-step nilmanifold.

Let $b$ be any element in $G$. Then, $b$ acts on $G$ by left multiplication. Let $m_X$ be the image of the Haar measure of $G$ on $X$ under the natural projection map. Then, $m_X$ is invariant under the action of the element $b$ (and therefore the action of $G$). If we set $T(g\G)=(bg)\G$, then the transformation $T$ is called a nilrotation, and $(X,m_X, T)$ is called a nilsystem. If the transformation $T$ is ergodic, we say that $b$ acts ergodically on the nilmanifold $X$. It can be proven that $b$ acts ergodically on $X$ if and only the sequence $(b^nx)_{n\in\N}$ is dense on $X$ for all $x\in X$ (see, for instance, \cite[Chapter 11]{Host-Kra structures}).    

Let $x_n$ be a sequence of elements on $X=G/\G$. We say that $x_n$ is equidistributed on $X=G/\G$ if and only if for every continuous function $F:X\to \C$, we have \begin{equation*}
     \lim\limits_{N\to+\infty }\underset{1\leq n\leq N}{\E}F(x_n)=\int_X F d m_{X}
 \end{equation*}where $m_X$ is the (normalized) Haar measure of $X$. 

 A rational subgroup $H$ is a subgroup of $G $ such that $H\cdot e_X$ is a closed subset of $X=G/\G$, where $e_X$ is the identity element of $X$. Equivalently, $H\G$ is a closed subset of the space $G$. This, also, implies that $H$ must be closed in $G$ (see \cite[Chapter 10, Lemma 14]{Host-Kra structures}). A subnilmanifold  of $X$ is a set $Y\subset X$ of the form $H\cdot x$, where $x$ is an element of $X$ and $H$ is a rational subgroup of $G$.

\subsubsection{Horizontal torus and characters}
Assume $X=G/\G$ is a $k$-step nilmanifold with $G$ connected and simply connected and consider the subgroup $G_2=[G,G]$. The nilmanifold $Z=G/(G_2\G)$ is called the horizontal torus of $X$. We observe that $Z$ is a connected, compact Abelian Lie group, and thus isomorphic to some torus $\T^{d}$. For a $b\in G$, it can be shown that the nilrotation induced by $b$ is ergodic, if and only if the induced action of $b$ on $Z$ is ergodic \cite[Theorem 3]{parry} (see also the theorem in section 2.17 of \cite{Leibmanpointwise}).

A horizontal character $\chi$ is a continuous group morphism $\chi:G\to \C$, such that $\chi(g\gamma)=\chi(g)$ for all $\gamma\in\G$. We observe that $\chi$ also annihilates $G_2$ and therefore descends to the horizontal torus $Z$. Thus, under the natural projection map $\pi$, $\chi$ becomes a character on some torus $\T^d$. We will often use the notation $\chi\circ \pi$ when working in the horizontal torus, while we reserve the letter $\chi$ to denote the same character in the original group $G$.

\subsubsection{Change of base point}\label{change of base point}
For every $b\in G$, we have that the sequence $b^n\G$ is equidistributed in the set $ \{\overline{b^n\G\colon n\in \Z}\}$. Therefore, if $g$ is any other element in $G$, we have that the sequence $b^ng\G$ is equidistributed in the nilmanifold $g\overline{\{(g^{-1}bg )^n\G,n\in\N\}}$. This follows by noting that $b^ng=g(g^{-1}bg)^n$. An analogous relation holds for the elements of the set $(b^sg)_{s\in \R}$, which we define below. This trick, which is called the change of base point trick, can be used when we want to show that some sequence $v(n)x$ is equidistributed (on some specific nilmanifold depending on $x$) in order to change the base point $x$ to $\G$.

\subsubsection{Reduction to connected-simply connected Lie groups}
Let $G$ be a $k$-step nilpotent Lie group and let $\G$ be a uniform subgroup of $G$. Then, the space $X=G/\G$ is called a $k$-step nilmanifold. The space $X$ may have several representations of the form $G/\G$ (with possible variance in the degree of nilpotency). Let $G^{\circ}$ be the connected component of $e_G$ in $G$. If we assume that $G/G^{\circ}$ is finitely generated\footnote{Without loss of generality we can assume that in this article, because our results deal with the action of $G$ on finitely many elements of $X$.}, then, by passing to the universal cover $\tilde{G}$ of $G$, it can be shown that $X$ has a representation $\tilde{G}/\tilde{\G}$ where now the underlying group $\tilde{G}$ is simply connected. In addition, we can argue as in \cite[Section 1.11]{Leibmanpointwise} to deduce that $X$ can be embedded as a subnilmanifold in some nilmanifold $G'/\G'$, where $G'$ is a connected and simply connected nilpotent Lie group and every translation on $X$ has a representation in $X'=G'/\G'$. This means that for any $x\in X,\ b_1,\dots b_k\in G$ and continuous function $F:X\to \C$, we can find $x'\in X'$, $b_1',\dots,b'_k\in G'$ and $F':X'\to \C$, such that $F(b_1^{n_1}\dots b_k^{n_k}x)=   F'((b_1')^{n_1}\dots (b'_k)^{n_k}x)$ for all $n_1,\dots,n_k\in \Z$.

\subsection{Nilorbits and Ratner's theorem}\label{bs-Ratner subsection }

Let $G$ be a connected and simply-connected Lie group. It is well known that the exponential map $\exp $  from the Lie algebra of $G$ to $G$ is a diffeomorphism. In particular, it is a bijection between $G$ and its Lie algebra $\mathfrak{g}$. For $b\in G$ and $t\in \R$ we can then define the element $b^t$ as the unique element of $G$ satisfying $b^t=\exp(tX)$, where $\exp(X)=b$. As a corollary of Ratner's theorem \cite{Ratner}, we get the following:

\begin{lemma}\label{Ratnerlemma}
Let $G/\G$ be a nilmanifold with $G$ connected and simply connected. For any elements $b_1,...,b_k\in\G$, we have that the set \begin{equation*}
    \overline{b_1^{\R}\cdots b_k^{\R}\G}=\overline{\{b_1^{t_1}\cdots b_k^{t_k}\G\colon t_1,...,t_k\in \R\}}
\end{equation*}is a subnilmanifold of $X$ with a representation $H/\Delta$, for some closed, connected and rational subgroup $H$ of $G$ that contains the elements $b_1^s,...,b_k^s$ for all $s\in\R$ and $\Delta$ is a uniform subgroup of $H$.
\end{lemma}

We call the set $\{\overline{b^t\G\colon t\in \R}\}$ the nil-orbit of the element $b$. We will analogously denote by $\overline{b^{\Z}\G}$ the set $\{\overline{b^n\G\colon n\in \Z}\}$ and $\overline{b^{\N}\G}=\{\overline{b^n\G\colon n\in \N}\}$.

 We establish the following lemma, which will be necessary for our proofs.
 \begin{lemma}\label{ergodicaction}
Let $X=G/\G$ be a nilmanifold and let $b_1,...,b_k\in\G$ be any pairwise commuting elements. Then, there exists a real number $t$ such that \begin{equation*}
    \overline{b_1^{\R}\dots b_k^{\R}\G}= \overline{\{b_1^{n_1t}\dots b_k^{n_k t}\G\colon n_1,...,n_k\in \Z\}}.
\end{equation*}
\end{lemma} 
\begin{proof}
We want to find some $t\in \R$ so that the sequence $$\phi_t(n_1,...,n_k)=b_1^{n_1t}\dots b_k^{n_k t}$$ is equidistributed on the nilmanifold $Y=\overline{b_1^{\R}\dots b_k^{\R}\G}$. By Lemma \ref{Ratnerlemma}, $Y$ has a representation as $H/\Delta$, where $H$ is connected, simply connected and rational. Observe that $\phi_t$ naturally induces a $\Z^k$ action on $Y$ by $(\phi_t(n_1,...,n_k),h\Delta )\to b_1^{n_1t}\dots b_k^{n_k t}h\Delta$.
It is sufficient to show that this $\Z^k$-action is ergodic on $Y$, since this implies that $Y=\overline{\{\phi_t({\bf n})y,{\bf n}\in \Z^k\}}$ for all $y\in Y$. 
However, using the results in \cite{Leibmanpointwise} (specifically, Theorem 2.17), the above action is ergodic if and only if it is ergodic on the horizontal torus $Z$ of $Y$, which is homeomorphic to some torus $\T^d$. Equivalently, if we denote by $(b_{i,1},...,b_{i,d})$ the projection of the point $b_i\G$ on $Z$, then we need to check whether the sequence \begin{equation*}
    \big(t(n_1b_{i,1}+\dots +n_k b_{k,1}), ... ,t(n_1b_{1,d}+\dots+n_kb_{k,d})\big)
\end{equation*}is dense on $\T^d$. It suffices to choose $t$ so that $1/t$ is rationally independent of any integer combination of the coordinates $b_{i,j}$. This completes the proof.
\end{proof}

 \subsection{Polynomial sequences on nilmanifolds}
 
 We provide the general definition of polynomial sequences with respect to some filtration. 
 \begin{definition}\label{filtration}
 A filtration $G_{\bullet}$ of degree $d$ on a nilpotent Lie group $G$ is a sequence of closed connected subgroups $$G=G^{(0)}=G^{(1)}\supseteq G^{(2)}\supseteq \dots \supseteq G^{(d)}\supseteq G^{(d+1)}={e_G},$$such that $[G^{(i)},G^{(j)}]\subseteq G^{(i+j)}$ for all $i,j\geq 0$. The filtration is called rational if all groups $G^{(i)}$ appearing in the above sequence are rational subgroups of $G$. A polynomial sequence on $G$ with respect to the above filtration is a sequence $g(n)$ such that, for all positive integers $h_1,...,h_k$, we have that the sequence $\partial_{h_1}\dots \partial_{h_k} g$ takes values in $G^{(k)}$, for all $k\in \N$, where $\partial_h $ denotes the "differencing operator" that maps the sequence $(g(n))_{n\in \N}$ to the sequence $(g(n+h)(g(n))^{-1})_{n\in \N}$.
 \end{definition}
 
 An example of a filtration is the lower central series of the group $G$. For the purposes of this article, we will only need to consider polynomial sequences of the form \begin{equation}\label{polynomialsequencedef}
     v(n)=b_1^{p_1(n)}\cdot ...\cdot b_k^{p_k(n)}
\end{equation}where $b_i\in G$ for all $1\leq i\leq k$ and $p_i$ are real polynomials. Note that the terms $b_i^{p_i(n)}$ are well defined, due to our connectedness assumptions. To see that this is indeed a polynomial sequence with our initial definition, we construct a specific filtration on $G$. We assume that $G$ is $k$-step nilpotent and we also denote the maximum degree among the polynomials $p_i$ as $d$. We consider the filtration (of degree $dk$) $G_{\bullet}=(G^{(i)})_{0\leq i\leq dk}$, where $G^{(i)}=G_{\floor{i/d}+1}$ and $G_j$ are the commutator subgroups of $G$. This is a rational filtration because all commutator subgroups of $G$ are rational (see \cite[Chapter 10, Proposition 22]{Host-Kra structures} for the proof).
Then, the sequence $v(n)$ in \eqref{polynomialsequencedef} is a polynomial sequence with respect to this filtration. We direct the reader to the discussion after \cite[Corollary 6.8]{Green-Tao}, where these last observations were made originally.
We will also call the projected sequence $v(n)\G$ on $X=G/\G$ a polynomial sequence on $X$.

\subsection{Quantitative equidistribution}\label{quantitative subsection}

Assume that $p(t)$ is a polynomial. Then, $p(n)$ can be expressed uniquely in the form \begin{equation*}
    p(n)=\sum_{i=0}^{d}a_i n^i 
\end{equation*} for some real numbers $a_i$ and $d\in\N$. For $N\in \N$, we define the smoothness norm\footnote{The definition of the smoothness norms is a bit different in \cite{Green-Tao}. There, the authors write the polynomials in the form $p(n)=\sum_{i=0}^{d}a_i\binom{n}{i}$ and define the smoothness norm using the same definition as \eqref{smmothness norm} (the coefficients $a_i$ are different). However, these definitions give two equivalent norms and, thus, all theorems can be stated for both norms, up to changes in the absolute constants.}\begin{equation}\label{smmothness norm}
    \norm{e(p(n))}_{C^{\infty}[N]}=\max\limits_{1\leq i\leq d} (N^i\norm{a_i}_{\R/\Z} ).
\end{equation}

 A filtration on a Lie group $G$ gives rise to a basis on its Lie algebra $\mathcal{B}$, which is called a Mal'cev basis \cite{Malcev}. Mal'cev bases play an essential role in the theory of quantitative equidistribution on nilmanifolds. Firstly, we give the following definition:
 \begin{definition}\label{malcev}
 Let $X=G/\G$ be a $k$-step nilmanifold with a rational filtration $G_{\bullet}=(G^{(i)})_{i\geq 0}$. Define $m=\text{dim}(G)$ and $m_i=\text{dim}(G^{(i)})$. A basis $(\xi_1,...,\xi_m)$ of the associated Lie algebra $\mathfrak{g}$ over $\R$ is called a Mal'cev basis adapted to $G_{\bullet}$, if the following conditions are met:\\
 i) For each $0\leq j\leq m-1$, $\mathfrak{h}_j=\textit{span}(\xi_{h+1},...,\xi_m)$ is a Lie algebra ideal on $\mathfrak{g}$ and thus $H_j=\exp(\mathfrak{h}_j)$ is a normal Lie subgroup of $G$.\\
 ii) For every $0\leq i\leq k$, we have $G^{(i)}=H_{m-m_i}$.\\
 iii) Each $b\in G$ can be uniquely written in the form $\exp(t_1\xi_1)... \exp(t_m\xi_m)$ for $t_i\in \R$.\\
 iv) The subgroup $\G$ consists precisely of those elements which, when written in the above form, have all $t_i\in \Z$.
 \end{definition}

Suppose that the element $b$ is written in the form $\exp(t_1\xi_1)\cdots \exp(t_m\xi_m)$. The map $\psi:G\to \R^m $ defined by $\psi(b)= (t_1,...,t_m)$ is a diffeomorphism from $G$ to $\R^m$. The numbers $(t_1,...,t_m)$ are called the coordinates of $g$ with respect to the associated Mal'cev basis. If we consider the Euclidean metric on $\R^m$, we can construct a Riemannian metric $d_G$ on $G$, whose value at the origin is equal to the Euclidean metric of $\R^m$ at the origin (of $\R^{m}$) composed with the inverse map $\psi^{-1}$. This metric is invariant under right translations and induces a metric $d_X$ on $X=G/\G$ defined by the relation:\begin{equation*}
    d_X(g\G,h\G)=\inf\{ d_G(b,b'), bg^{-1}\in \G, b' h^{-1}\in \G  \}.
\end{equation*}
The metric used in \cite{Green-Tao} is slightly different than the one we consider here, but as the authors remark, these metrics are equivalent and all theorems hold as well by changing the absolute constants.

The sequence $(g(n)\Gamma)_{1\leq n\leq N}$ is said to be $\delta$-equidistributed on the nilmanifold $X=G/\Gamma$ if and only if for any Lipschitz function $F:X\to \C$, we have that \begin{equation*}
\big|    \underset{1\leq n\leq N}{\E} F(g(n)\Gamma)  -\int_X Fd\m_X                        \big|\leq \delta \norm{F}_{\text{Lip}(X)}
\end{equation*}where\begin{equation*}
    \norm{F}_{\text{Lip}(X)}=\norm{F}_{\infty} +\sup_{x,y\in X,\  x\neq y}^{}\frac{|F(x)-F(y)|}{d_X(x,y)}.
\end{equation*}

We now fix a $k$-step nilmanifold $X=G/\G$, as well as a positive integer $d$. We equip it with the rational filtration $G_{\bullet}$ of degree $dk$ that we defined above (after Definition \ref{filtration}), as well as a Mal'cev basis adapted to this filtration and the corresponding coordinate map $\psi: G\to \R^m$ ($m$ is the dimension of $G$). Observe that under this filtration, we have that $G^{(d+1)}=G_2$ and property ii) in Definition \ref{malcev} implies that $G_2=H_{m-m_{d+1}}$. Thus, the Mal'cev basis induces an isometric identification of the horizontal torus $Z=G/G_2\G$ with the torus $\T^{m-m_{d+1}}$ equipped with the standard metric.

Let $\pi: X\to Z$ denote the projection map and let $\chi$ be a horizontal character on $G$. Consider an element $b\in G$ with coordinates $(t_1,...,t_m)$. Then, by properties iii) and iv) in Definition \ref{malcev}, we have that there is some $ \overset{\rightarrow}{\ell}=(\ell_1,...,\ell_{m-m_{d+1}})\in \Z^{m-m_{d+1}}$ such that $$\chi\circ \pi (b)=\ell_1t_1+\dots +\ell_{m-m_{d+1}}t_{m-m_{d+1}}.$$Thus, we get a character on the torus $\T^{m-m_{d+1}}$(written here with additive notation). We can then define the modulus $\norm{\chi}$ of the character $\chi$ to be equal to \begin{equation}\label{moduli}
\norm{\overset{\rightarrow}{\ell}}=|\ell_1|+\dots +|\ell_{m-m_{d+1}}|.
\end{equation} 
If $v(n)$ is the polynomial sequence in \eqref{polynomialsequencedef} (recall that it is a polynomial sequence with respect to the filtration $G_{\bullet}$), then the sequence $\chi\circ \pi (v(n)\G)$ is a polynomial sequence on the horizontal torus $Z\cong \T^{m-m_{d+1}}$. Indeed, if we denote $\psi(b_i)=(t_{i,1},...,t_{i,m})$, then a simple calculation shows that\begin{multline*}
    \chi(\pi(v(n)\G))=\chi\big(\pi(b_1^{p_1(n)}\cdot ...\cdot b_k^{p_k(n)})\big)=\\p_1(n)(\ell_1 t_{1,1}+\dots+\ell_{m-m_{d+1}}t_{1,m-m_{d+1}})+\dots +p_k(n)(\ell_1 t_{k,1}+\dots+\ell_{m-m_{d+1}}t_{k,m-m_{d+1}}),
\end{multline*}which makes the fact that $\chi(\pi(v(n)\G))$ is a polynomial sequence more evident.

The primary tool that we shall use is the following theorem of Green-Tao which describes the orbits of polynomial sequences in finite intervals. We present it in the case of our filtration $G_{\bullet}$, although the statement holds for any rational filtration. Some quantitative information (specifically relating to the concepts of quantitative rationality of Mal'cev bases) has been suppressed, since in our applications the nilmanifold will be fixed and the above condition on the Mal'cev bases is guaranteed if we take $\delta$ small enough.

\begin{customthm}{F}   \cite[Theorem 2.9]{Green-Tao}    \label{quantitative}
Let $d$ be a non-negative integer, $X=G/\Gamma$ be a nilmanifold with $G$ connected and simply connected and we equip the nilmanifold $X$ with the Mal'cev basis adapted to the $dk$ filtration $G_{\bullet}$ as above. Assume $\delta$ is a sufficiently small (depending only on $X,d$) parameter. Then, there exist a positive constant $C=C(X,d)$ with the following property: For every $N\in\N$, if $(v(n))_{n\in \N} $ is a polynomial sequence with respect to $G_{\bullet}$ such that the finite sequence $(v(n)\Gamma)_{1\leq n\leq N}$ is not $\delta$-equidistributed, then for some non-trivial horizontal character $\chi$ (that depends on $N$ and the sequence $v(n)$) of modulus $\norm{\chi}\leq \delta^{-C}$ we have\begin{equation*}
    \norm{\chi(\pi (v(n)\G))}_{C^{\infty}(N)}\leq \delta^{-C},
\end{equation*}where $\pi$ denotes the projection map from $X$ to its horizontal torus.
\end{customthm}

In order to get a sense of how this theorem works, let us consider an application on a polynomial sequence on $\T$. Let $d$ be a positive integer and $\delta>0$ a small real number. Then, there exists a constant $C$ that depends only on $d$, such that for any polynomial $$p(t)=a_dx^d+\dots+a_1t+a_0$$of degree $d$, we have either that \begin{equation*}
    \Big|\E_{1\leq n\leq N}e(p(n))\Big|<\delta,
\end{equation*}or there exists an integer $q$ with $|q|\leq \delta^{-C}$, such that \begin{equation*}
    N^k\norm{qa_k}_{\T}\leq \delta^{-C}
\end{equation*}for every $1\leq k\leq d$.
Thus, either the exponential sums of the polynomial sequence $p(n)$ are small or the non-constant coefficients $a_k$ satisfy a "major-arc" condition (they are "close" to a rational with denominator bounded by $\delta^{-C}$). Observe that the constant $C$ does not depend on the length of the interval $N$.
\end{appendix}

\end{document}